\title{Height one specializations of Selmer groups}
\author{Bharathwaj Palvannan}
\subjclass[2000]{Primary 11R23; Secondary 11F33, 11F80}
\keywords{Iwasawa theory, Hida theory, Selmer groups}
\address{Department of Mathematics  \\ University of Pennsylvania \\  209 South 33rd Street, 4N53 DRL, \\ Philadelphia 19104-6395, USA}
\email{pbharath@math.upenn.edu}
\begin{document}

\begin{abstract}
We provide applications to studying the behavior of Selmer groups under specialization. We consider Selmer groups associated to four dimensional Galois representations coming from (i) the tensor product of two cuspidal Hida families $F$ and $G$, (ii) its cyclotomic deformation,  (iii) the tensor product of a cusp form $f$ and the Hida family $G$, where $f$ is a classical specialization of $F$ with weight $k \geq 2$. We prove control theorems to  relate (a) the Selmer group associated to the tensor product of Hida families $F$ and $G$ to the Selmer group associated to its cyclotomic deformation and (b) the Selmer group associated to the tensor product of $f$ and $G$ to the Selmer group associated to the tensor product of $F$ and $G$. On the analytic side of the main conjectures, Hida has constructed one variable, two variable and three variable Rankin-Selberg $p$-adic $L$-functions. Our specialization results enable us to verify that Hida's results relating (a) the two variable $p$-adic $L$-function to the three variable  $p$-adic $L$-function and (b) the one variable  $p$-adic $L$-function to the two variable $p$-adic $L$-function and our control theorems for Selmer groups are completely consistent with the main conjectures.
\end{abstract}

\maketitle

Studying the behavior of Selmer groups under specialization has been quite fruitful in the context of the main conjectures in Iwasawa theory. We cite three examples where the topic of specializations of Selmer groups has come up in the past; it has come up in a work of Greenberg \cite{greenberg26iwasawa} to prove a result involving classical Iwasawa modules corresponding to a factorization formula proved by Gross involving Katz's $2$-variable $p$-adic $L$-function, in the work of Greenberg-Vatsal \cite{greenberg2000iwasawa} while studying an elliptic curve with an isogeny of degree $p$ and in an earlier work  of ours \cite{bharathwaj2016algebraic} to prove a result involving Selmer groups corresponding to a factorization formula proved by Dasgupta involving Hida's Rankin-Selberg $3$-variable $p$-adic $L$-function. \\

The main results of this paper will deal with Selmer groups to which the Rankin-Selberg $p$-adic $L$-functions, constructed by Hida in \cite{hida1988p}, are associated. The results of Lei-Loeffler-Zerbes \cite{lei2012euler} and Kings-Loeffler-Zerbes \cite{kings2015rankin} suggest that the proofs of the main conjectures associated to these Rankin-Selberg $p$-adic $L$-functions are imminent;  the results of this paper provide evidence to these conjectures.  The results of this paper contrast well with the three examples cited earlier since in this paper we deal with ``critical'' specializations (the three examples cited earlier dealt with ``non-critical'' specializations). After the completion of this project, the author learnt about the paper \cite{hara2015cyclotomic} where the behavior of Selmer groups under certain critical specializations has also been studied. Hara and Ochiai (see Theorem D in \cite{hara2015cyclotomic}) deduce the cyclotomic Iwasawa main conjecture over a CM field $E$ (formulated over a regular local ring with Krull dimension $2$) for Hilbert cuspforms with complex multiplication from the main conjecture formulated (over a regular local ring with Krull dimension greater than  or equal to $\frac{[E:\Q]}{2}+2$) by Hida and Tilouine in \cite{hida1994anticyclotomic}; the Hida-Tilouine main conjecture has been proved by Ming-Lun Hsieh \cite{hsieh2014eisenstein} under certain hypotheses. To place our work involving specializations of Selmer groups in the context of main conjectures, this result of \cite{hara2015cyclotomic} might serve as a useful template for the reader to keep in mind. \\

To prove our results, we will use the techniques developed in an earlier work of ours \cite{bharathwaj2016algebraic} (which is briefly summarized in Section \ref{towards-the-proof}). The novelty in the techniques introduced in \cite{bharathwaj2016algebraic} is that the local domains, appearing in Hida theory and over which the main conjectures  are formulated, are not always known to be regular or even a UFD. See Section \ref{examples} where we provide such examples. These examples are based on the circle of ideas first developed by Hida in \cite{hida1998global} and later explored in works of Cho \cite{cho1999deformation} and Cho-Vatsal \cite{cho2003deformations}.  In contrast, the main conjectures over CM fields are formulated over regular local rings.  One obstacle from the perspective of commutative algebra that thus needs to be overcome is that the kernel of the specialization map (which in our case is a prime ideal of height one) is not known to be principal. See Remark \ref{obstacle} \\

Let us introduce some notations. Let $p \geq 5$ be a fixed prime number. Let $F=\sum_{n=1}^\infty a_n q^n~\in~R_{F}[[q]]$ and $G=\sum_{n=1}^\infty b_n q^n \in R_G[[q]]$ be two ordinary cuspidal Hida families. Here, we let $R_F$ and $R_G$ denote the integral closures of the irreducible components of the ordinary primitive cuspidal Hecke algebras through $F$ and $G$ respectively. The rings $\R_F$ and $\R_G$ turn out to be finite integral extensions of $\Z_p[[x_F]]$ and $\Z_p[[x_G]]$ respectively, where $x_F$ and $x_G$ are ``weight variables'' for $F$ and $G$ respectively. We assume the following hypotheses on $F$ and $G$:

\begin{enumerate}[style=sameline,leftmargin=2cm, style=sameline, align=left,label=\textsc{IRR}, ref=\textsc{IRR}]
\item\label{residual-irr} The residual representations associated to $F$ and $G$ are absolutely irreducible.
\end{enumerate}
\begin{enumerate}[style=sameline, leftmargin=2cm, style=sameline, align=left,label=\textsc{$p$-DIS-IN}, ref=\textsc{$p$-DIS-IN}]
\item \label{p-distinguished-inertia} The restrictions, to the inertia subgroup $I_p$ at $p$, of the residual representations associated to $F$ and $G$  have non-scalar semi-simplifications.
\end{enumerate}
Let $\Sigma$ be a finite set of primes in $\Q$ containing the primes $p$, $\infty$, a finite prime $l_0 \neq p$ and all the primes dividing the levels of $F$ and $G$. Let $\Sigma_0 = \Sigma \setminus \{p\}$. Let $O$ denote the ring of integers in a finite extension of $\Q_p$. Let $\Q_\Sigma$ denote the maximal extension of $\Q$ unramified outside $\Sigma$.  Let $G_\Sigma$ denote $\Gal{\Q_\Sigma}{\Q}$. We have Galois representations $\rho_F: G_\Sigma \rightarrow \Gl_2(\R_F)$ and $\rho_G: G_\Sigma \rightarrow \Gl_2(\R_G)$ associated to $F$ and $G$ respectively (see \cite{hida1986galois}). We let $L_F$ (and $L_G$) be the free $\R_F$-module (and $R_G$-module) of rank $2$ on which $G_\Sigma$ acts to let us obtain the Galois representation $\rho_F$ (and $\rho_G$ respectively).  Assume that the integral closures of $\Z_p$ in $\R_F$ and $\R_G$ are equal (extending scalars if necessary)\footnote{The ring $R_{F,G}$ turns out to be a domain. This is because the completed tensor product $R_{F,G}$ can be shown to be isomorphic to the tensor product $R_{F}[[x_G]] \otimes_{O[[x_G]]} R_G$. The fraction field of $R_G$ is a finite field extension of the fraction field of $O[[x_G]]$, while the fraction field of $R_{F}[[x_G]]$ is a purely transcendental extension of the fraction field of $O[[x_G]]$. Since the fraction fields of $R_{F}[[x_G]] $ and $R_G$ are ``\textit{linearly disjoint over the fraction field of $O[[x_G]]$}'', the tensor product $R_{F}[[x_G]] \otimes_{O[[x_G]]} R_G$ (and hence $R_{F,G}$) turns out to be a domain.} to $O$.

\subsection*{Specializing from 3-variables to 2-variables}
Let $\R_{F,G}$ denote the completed tensor product $\R_F \hotimes \R_G$. The completed tensor product is the co-product in the category of complete semi-local Noetherian $O$-algebras (where the morphisms are continuous $O$-algebra maps).  We have the natural inclusions $i_F:\R_F\hookrightarrow\R_{F,G}$ and $i_G:\R_{G} \hookrightarrow\R_{F,G}$.
One can construct a 4-dimensional Galois representation $\rho_{F,G}~:~G_\Sigma~\rightarrow~\Gl_4(\R_{F,G})$, that is given by the action of $G_\Sigma$ on the following free $\R_{F,G}$-module of rank $4$:
 \begin{align*} L_2 := \Hom_{\R_{F,G}} \bigg(L_F \otimes_{R_F} \R_{F,G},\ \ L_G \otimes_{R_G} \R_{F,G} \bigg).\end{align*}
Let $\rho_{F,G} \otimes \kappa^{-1} : G_\Sigma \rightarrow \Gl_4(\R_{F,G}[[\Gamma]])$ denote the Galois representation  given by the action of $G_\Sigma$ on the following free $R_{F,G}[[\Gamma]]$-module of rank $4$:

\begin{align*}L_3 := L_2 \otimes_{\R_{F,G}} \R_{F,G}[[\Gamma]] (\kappa^{-1}).\end{align*}
We let $\Q_\infty$ denote the cyclotomic $\Z_p$-extension of $\Q$ and we let $\Gamma$ denote the Galois group $\Gal{\Q_\infty}{\Q}$. Here, $\kappa : G_\Sigma \twoheadrightarrow \Gamma \hookrightarrow~\Z_p[[\Gamma]]^\times$ denotes the tautological character. To simplify notations, we will let $\rho_2$ denote $\rho_{F,G}$ and $\rho_3$ denote $\rho_{F,G} \otimes \kappa^{-1}$. Hida has constructed elements $\theta^{\Sigma_0}_2$ and $\theta^{\Sigma_0}_3$, in the fraction fields of $\R_{F,G}$ and $\R_{F,G}[[\Gamma]]$ respectively, as generators for non-primitive $p$-adic $L$-functions associated to $\rho_2$ and $\rho_3$ respectively. See section 7.4 and section 10.4 of \cite{hida1993elementary} for their constructions and the precise interpolation properties that they satisfy. The elements $\theta^{\Sigma_0}_2$ and $\theta^{\Sigma_0}_3$ generate a 2-variable $p$-adic $L$-function and 3-variable $p$-adic $L$-function respectively. For the 2-variable $p$-adic $L$-function, one can vary the weights of $F$ and $G$. For the 3-variable $p$-adic $L$-function, in addition to the weights of $F$ and $G$, one can also vary the cyclotomic variable. Notice that the subscripts in our notation for the Galois representations or the associated Galois lattices indicate the number of variables in the corresponding $p$-adic $L$-functions. \\

Let $\pi_{3,2} : \R_{F,G}[[\Gamma]] \rightarrow \R_{F,G}$ be the map defined by sending every element of $\Gamma$ to $1$. We have the Galois representation   $\pi_{3,2}\circ\rho_3:G_\Sigma\xrightarrow {\rho_3}\Gl_4(\R_{F,G}[[\Gamma]])\xrightarrow {\pi_{3,2}}\Gl_4(\R_{F,G})$, along with the following isomorphism of Galois representations:
\begin{align}
\pi_{3,2} \circ \rho_3 \cong \rho_2.
\end{align}
Hida has proved the following theorem relating $\theta^{\Sigma_0}_2$ and $\theta^{\Sigma_0}_3$. See Theorem 1 in Section 10.5~of~\cite{hida1993elementary} and Theorem 5.1d' in \cite{hida1988p}.

\begin{Theorem}[Hida] \label{analysis-3-2} $\pi_{3,2}\left(\theta^{\Sigma_0}_{3}\right) = \left( 1-\frac{i_F(a_p)}{i_G(b_p)} \right) \cdot \theta^{\Sigma_0}_{2}$.
\end{Theorem}

Recall that the ordinariness assumption on $F$ and $G$ ensures us that the coefficients $i_F(a_p)$ and $i_G(b_p)$ would be units in the ring $\R_{F,G}$.  On the algebraic side, we can construct Selmer groups $\Sel_{\rho_2}(\Q)$ and $\Sel_{\rho_3}(\Q)$ associated to the Galois representations $\rho_2$ and $\rho_3$ respectively. We prove the following control theorem relating $\Sel_{\rho_2}(\Q)$ and $\Sel_{\rho_3}(\Q)$:

\begin{Theorem} \label{algebra-3-2}
We have the following equality in the divisor group of $\R_{F,G}$:
\begin{align*}
 \Div \left( \Sel_{\rho_3}(\Q)^\vee \otimes_{R_{F,G}[[\Gamma]]} \R_{F,G} \right) &+ \Div \bigg(H^0(G_\Sigma, D_{\rho_3})^\vee [\ker(\pi_{3,2})]\bigg)  \\& =  \Div \left( \Sel_{\rho_2}(\Q)^\vee \right) + \Div \left( 1-\frac{i_F(a_p)}{i_G(b_p)} \right) .
\end{align*}
\end{Theorem}
Let us explain some of our notations. If $A$ is a profinite ring, we let $\hat{A}$ denote the Pontryagin dual of $A$. If $M$ is a continuous $A$-module, we let $M^\vee$ denote its Pontryagin dual. To a finitely generated torsion module over an integrally closed Noetherian local domain or a non-zero element of its fraction field, we shall associate an element of the divisor group following  \cite{greenberg1994iwasawa}. If $M$ is an $A$-module and $I$ is an ideal of $A$, the $A/I$-module
$M[I]$ is defined to be the set $\{ m \in M \ | \ i \cdot m =0, \ \forall i \in I\}$. The discrete modules $D_{\rho_3}$, $D_{\rho_2}$ and $D_{\rho_1}$ will be precisely defined in Section \ref{selmer}. We will assume throughout this paper that the $R_{F,G}[[\Gamma]]$-module $\Sel_{\rho_3}(\Q)^\vee$ and the $R_{F,G}$-module $\Sel_{\rho_2}(\Q)^\vee$ are torsion. \\

One can formulate a main conjecture in Iwasawa theory for $\rho_2$ and $\rho_3$ following \cite{greenberg1994iwasawa}. The main conjecture for $\rho_3$ predicts the  following equality of divisors in $R_{F,G}[[\Gamma]]$:
\begin{align} \label{MC3} \tag{MC-$\rho_3$}
 \Div \left(\theta_3^{\Sigma_0} \right)   \stackrel{?}{=}\Div \left(\Sel_{\rho_3}(\Q)^\vee\right) - \Div \left( H^0(G_\Sigma, D_{\rho_3})^\vee \right).
\end{align}
The main conjecture for $\rho_2$ predicts the  following equality of divisors in $R_{F,G}$:
\begin{align} \label{MC2} \tag{MC-$\rho_2$}
 \Div \left(\theta_2^{\Sigma_0} \right)   \stackrel{?}{=}\Div \left(\Sel_{\rho_2}(\Q)^\vee\right) - \Div \left( H^0(G_\Sigma, D_{\rho_2})^\vee \right).
\end{align}
We prove the following theorem to show that Theorem \ref{analysis-3-2} and Theorem \ref{algebra-3-2} are completely consistent with the main conjectures \ref{MC3} and \ref{MC2}:

\begin{Theorem} \label{specialization-3-2}
If the main conjecture \ref{MC3} holds, then the main conjecture \ref{MC2} also holds.
\end{Theorem}

\begin{Remark} \label{tor-preference}
As we will show in equation (\ref{tor-iso-3-2}), we have the following isomorphism of $R_{F,G}$-modules:
\begin{align}\label{tor-h0-iso}
\Tor_{1}^{R_{F,G}[[\Gamma]]} \left( R_{F,G}, \ H^0(G_{\Sigma},D_{\rho_3})^\vee \right) \cong H^0(G_\Sigma, D_{\rho_3})^\vee[\ker(\pi_{3,2})].
\end{align}
Regarding Theorem \ref{algebra-3-2}, it will be useful to keep this isomorphism in mind. Since we invoke results from homological algebra, we will need to use the Tor functor frequently.
\end{Remark}
\subsection*{Specializing from 2-variables to 1-variable}
Let $f$ be a cuspidal $p$-stabilized eigenform obtained by specializing $F$ at a classical height one prime ideal $P_k$ of weight $k \geq 2$.  Let $\rho_f : G_\Sigma \rightarrow \Gl_2(O_f)$ denote the Galois representation associated to $f$ (constructed by Deligne). Here, $O_f$ denotes the integral closure of $\frac{\R_F}{P_k}$. We also obtain a natural map $\pi_f : \R_F \rightarrow O_f$. We let $L_f$ denote the free $O_f$-module of rank $2$ on which $G_\Sigma$ acts to let us obtain the Galois representation $\rho_f$. Let $R_{f,G}$ be the completed tensor product $O_f \hotimes \R_G$. We have natural maps $j_F: R_F \rightarrow O_f \hookrightarrow R_{f,G}$ and $j_G:\R_G \hookrightarrow R_{f,G}$. Consider the $4$-dimensional Galois representation $\rho_1:G_\Sigma\rightarrow\Gl_4(R_{f,G})$ given by the action of $G_\Sigma$ on the rank $4$ free $R_{f,G}$-module: \begin{align*}
L_1 := \Hom_{R_{f,G}} \left(L_F \otimes_{R_F} R_{f,G},\ L_G \otimes_{R_G} \R_{f,G} \right).
\end{align*}

Hida has also constructed an element $\theta_1^{\Sigma_0}$, in the fraction field of $R_{f,G}$, that generates a non-primitive 1-variable Rankin-Selberg $p$-adic $L$-function associated to $\rho_1$. See section 7.4 in \cite{hida1993elementary} for its construction and the precise interpolation properties that $\theta_1^{\Sigma_0}$ satisfies. For the $1$-variable $p$-adic $L$-function, one can vary the weight of $G$. We have a natural map $\pi_{2,1} : \R_{F,G}\rightarrow R_{f,G}$ obtained by the maps $j_F:\R_{F} \rightarrow O_f \hookrightarrow R_{f,G}$ and $j_G:\R_G \hookrightarrow R_{f,G}$.  The map $\pi_{2,1} : \R_{F,G}\rightarrow R_{f,G}$ lets us obtain the following isomorphism of Galois representations:
\begin{align} \pi_{2,1} \circ \rho_2 \cong \rho_1.\end{align}
Hida has proved following theorem relating $\theta_2^{\Sigma_0}$ and $\theta_1^{\Sigma_0}$ (see the comment just before Theorem 2 in section 7.4 of \cite{hida1993elementary}):

\begin{Oldtheorem}[Hida]\label{analysis-2-1} $\pi_{2,1} \left(\theta_2^{\Sigma_0}\right) =  \theta_1^{\Sigma_0}.$
\end{Oldtheorem}
Associated to $\rho_1$, we can construct the Selmer group $\Sel_{\rho_1}(\Q)$ . We will assume that $\Sel_{\rho_1}(\Q)^\vee$ is a torsion $R_{f,G}$-module. We prove the following control theorem relating $\Sel_{\rho_1}(\Q)$ and $\Sel_{\rho_2}(\Q)$:

\begin{Oldtheorem} \label{algebra-2-1}
We have the following equality in the divisor group of $R_{f,G}$~:
\begin{align*}
 \Div \left( {\Sel_{\rho_2}(\Q)}^\vee \otimes_{R_{F,G}} R_{f,G} \right) &+  \Div\bigg(\Tor_1^{R_{F,G}}\left(R_{f,G}, \ H^0(G_\Sigma,D_{\rho_2})^\vee \right)\bigg)   \\ &=  \Div \bigg( \Sel_{\rho_1}(\Q)^\vee \bigg).
 \end{align*}
\end{Oldtheorem}
The main conjecture for $\rho_1$ predicts the following equality of divisors in $R_{f,G}$:
\begin{align} \label{MC1} \tag{MC-$\rho_1$}
 \Div \left(\theta_1^{\Sigma_0} \right)   \stackrel{?}{=}\Div \left(\Sel_{\rho_1}(\Q)^\vee\right) - \Div \left( H^0(G_\Sigma, D_{\rho_1})^\vee \right).
\end{align}
We prove the following theorem to show that Theorem \ref{analysis-2-1} and Theorem \ref{algebra-2-1} are completely consistent with the main conjectures \ref{MC2} and \ref{MC1}:

\begin{Oldtheorem} \label{specialization-2-1}
If the main conjecture \ref{MC2} holds, then the main conjecture \ref{MC1} also holds.
\end{Oldtheorem}

\begin{Remark}
One can combine Proposition \ref{p-regular-prop} and Lemma \ref{p-invertible} to obtain the following equality of divisors in $R_{f,G}$:
\begin{align} \label{tor-h0-iso-2-1}
 &\Div\bigg(\Tor_1^{R_{F,G}}\left(R_{f,G}, \ H^0(G_\Sigma,D_{\rho_2})^\vee \right)\bigg) \\ \notag &=\Div \bigg(\left( H^0(G_\Sigma,D_{\rho_2})^\vee[\ker(\pi_{2,1})]\right) \otimes_{\frac{R_{F,G}} {\ker(\pi_{2,1})}} R_{f,G}\bigg),
\end{align}
which is similar to the one obtained in equation (\ref{tor-h0-iso}). As mentioned in Remark \ref{tor-preference}, we prefer stating Theorem \ref{algebra-2-1} using the term on the left hand side of equation (\ref{tor-h0-iso-2-1})  involving the Tor functor (instead of the term appearing on the right hand side).
\end{Remark}

\begin{Remark}
The main conjectures, formulated in \cite{greenberg1994iwasawa}, relate the primitive Selmer groups to the primitive $p$-adic $L$-functions. It is possible to relate the primitive Selmer groups to the non-primitive Selmer groups; it is also possible to relate the primitive $p$-adic $L$-functions to the non-primitive $p$-adic $L$-functions. As a result, it will be possible to formulate a conjecture, that is equivalent to the one formulated in \cite{greenberg1994iwasawa}, relating the non-primitive Selmer groups to the non-primitive $p$-adic $L$-functions. It is the latter formulation that we adopt in this paper. Since we are motivated by Hida's results involving the non-primitive $p$-adic $L$-functions, we will only work with the non-primitive Selmer groups; and since Hida's results simply serve as heuristics to motivate our theorems, we will not concern ourselves with proving that Greenberg's formulation of the main conjectures for $\rho_3$, $\rho_2$ and $\rho_1$ involving primitive Selmer groups and primitive $p$-adic $L$-functions in \cite{greenberg1994iwasawa} are equivalent to \ref{MC3}, \ref{MC2}, \ref{MC1} respectively.
\end{Remark}

\begin{Remark}
We would like make the reader aware of a point that was conveyed to us by Haruzo Hida. The $p$-adic $L$-functions $\theta^{\Sigma_0}_1$, $\theta^{\Sigma_0}_2$ and $\theta^{\Sigma_0}_3$ are ``genuine'' in the sense of \cite{hida1996search} and do not exactly equal the $p$-adic $L$-functions constructed in \cite{hida1993elementary}. Recall that we have inclusions $\R_G \hookrightarrow R_{f,G}$ and $\R_G \hookrightarrow \R_{F,G} \hookrightarrow \R_{F,G}[[\Gamma]]$. There is an element $\Theta \in \R_G$ that corresponds  to the divisor of a one-variable $p$-adic $L$-function associated to the 3-dimensional adjoint representation $\Ad^0(\rho_G)$ (see Conjecture 1.0.1 in \cite{hida1996search}).
The $p$-adic $L$-functions, that are constructed in \cite{hida1993elementary} as elements of the fraction fields of $R_{f,G}$, $R_{F,G}$ and $R_{F,G}[[\Gamma]]$ respectively, are in fact equal to $\frac{\theta^{\Sigma_0}_1}{\Theta}$, $\frac{\theta^{\Sigma_0}_2}{\Theta}$ and $\frac{\theta^{\Sigma_0}_3}{\Theta}$ respectively. A discussion surrounding the need to introduce this modification, which is related to the choice of a certain period (Neron period versus a period involving the Peterson inner product), is carefully explained in \cite{hida1996search} (see the introduction and Section 6 there). This point will not matter to us since the maps $\pi_{3,2}$ and $\pi_{2,1}$ are $R_G$-linear.
\end{Remark}

\begin{Remark} \label{obstacle}
The obstacle, presented by the fact the local domains appearing in Hida theory need not even be UFDs, manifests itself in the following way. Although one is interested in results, such as the one in Theorem \ref{algebra-3-2} (and Theorem \ref{algebra-2-1} respectively), in the divisor group of $R_{F,G}$ (and $R_{f,G}$ respectively), one is forced to consider the localizations of $\Sel_{\rho_3}(\Q)^\vee$ (and $\Sel_{\rho_2}(\Q)^\vee$ respectively) at certain height two prime ideals of $R_{F,G}[[\Gamma]]$ (and $R_{F,G}$ respectively).
\end{Remark}

\begin{Remark}
We always work with the hypothesis (labeled \ref{tor} in Section \ref{selmer}) that the $R_{F,G}[[\Gamma]]$-module $\Sel_{\rho_3}(\Q)^\vee$, the $R_{F,G}$-module $\Sel_{\rho_2}(\Q)^\vee$ and the $R_{f,G}$-module $\Sel_{\rho_1}(\Q)^\vee$ are torsion. To establish the hypothesis \ref{tor},  one can utilize the techniques involving Euler systems developed by Lei-Loeffler-Zerbes \cite{lei2012euler} and Kings-Loeffler-Zerbes \cite{kings2015rankin}. We refer the reader to \cite{lei2012euler} and  \cite{kings2015rankin} for the precise conditions when the Euler system machinery is available. Under these conditions, to establish \ref{tor}, one has to show that Hida's Rankin-Selberg $p$-adic $L$-functions $\theta^{\Sigma_0}_{3}$, $\theta^{\Sigma_0}_{2}$ and $\theta^{\Sigma_0}_{1}$ are non-zero. The non-vanishing of these $p$-adic $L$-functions is a recent work in progress of Jeanine Van Order. See \cite{2012arXiv1207.1672V} and  \cite{2012arXiv1207.1673V}. \\

It will be possible to obtain results, consistent with the various main conjectures, without assuming \ref{tor}. We briefly indicate how to obtain such results, though we leave the precise details to the interested reader. Suppose the $R_{F,G}$-module $\Sel_{\rho_2}(\Q)^\vee$ is not torsion. The Iwasawa main conjecture predicts that $\theta^{\Sigma_0}_{2}$ equals zero.  On the analytic side, Hida's theorem (Theorem \ref{analysis-3-2}) predicts that $\pi_{3,2}(\theta^{\Sigma_0}_{3})$ equals zero. On the algebraic side, one can establish a control theorem to show that $\ker(\pi_{3,2})$ belongs to the support of the $R_{F,G}[[\Gamma]]$-module $\Sel_{\rho_3}(\Q)^\vee$.

Suppose the $R_{f,G}$-module $\Sel_{\rho_1}(\Q)^\vee$ is not torsion. The Iwasawa main conjecture predicts that $\theta^{\Sigma_0}_{1}$ equals zero.  On the analytic side, Hida's theorem (Theorem \ref{analysis-2-1}) predicts that $\pi_{2,1}(\theta^{\Sigma_0}_{2})$ equals zero. On the algebraic side, one can similarly establish a control theorem to show that $\ker(\pi_{2,1})$ belongs to the support of the $R_{F,G}$-module $\Sel_{\rho_2}(\Q)^\vee$.
\end{Remark}

\section{Selmer groups} \label{selmer}

The hypothesis \ref{p-distinguished-inertia} lets us obtain following $\Gal{\overline{\Q}_p}{\Q_p}$-equivariant short exact sequences of free $R_F$ and $R_G$ modules respectively:
\begin{align} \label{seq-dim-2}
0 \rightarrow \underbrace{\Fil^+L_F}_{\text{Rank}=1} \rightarrow \underbrace{L_F}_{\text{Rank}=2} \rightarrow \underbrace{\frac{L_F}{\Fil^+L_F}}_{\text{Rank}=1} \rightarrow 0, \qquad 0 \rightarrow \underbrace{\Fil^+L_G}_{\text{Rank}=1} \rightarrow \underbrace{L_G}_{\text{Rank}=2} \rightarrow \underbrace{\frac{L_G}{\Fil^+L_G}}_{\text{Rank}=1} \rightarrow 0.
\end{align}
Here, the action of $\Gal{\overline{\Q}_p}{\Q_p}$ on the rank $1$ modules $\Fil^+L_F$ and $\Fil^+L_G$ is given by ramified characters $\delta_F : \Gal{\overline{\Q}_p}{\Q_p} \rightarrow R_F^\times$ and $\delta_G : \Gal{\overline{\Q}_p}{\Q_p} \rightarrow R_G^\times$ respectively. The action of $\Gal{\overline{\Q}_p}{\Q_p}$ on the rank $1$ modules $\frac{L_F}{\Fil^+L_F}$ and $\frac{L_G}{\Fil^+L_G}$ is given by unramified characters $\epsilon_F : \Gal{\overline{\Q}_p}{\Q_p} \rightarrow R_F^\times$ and $\epsilon_G : \Gal{\overline{\Q}_p}{\Q_p} \rightarrow R_G^\times$ respectively. Note that $\epsilon_F(\Frob_p)=a_p$ and $\epsilon_G(\Frob_p)=b_p$. Furthermore, using the residual characters $\overline{\delta}_F : \Gal{\overline{\Q}_p}{\Q_p} \rightarrow \overline{\mathbb{F}}_p^\times$ and $\overline{\delta}_G : \Gal{\overline{\Q}_p}{\Q_p} \rightarrow \overline{\mathbb{F}}_p^\times$ associated to $\delta_F$ and $\delta_G$ respectively, the hypothesis \ref{p-distinguished-inertia} can be restated in the following way:
\begin{align*}
\tag{$p$-DIS-IN} \overline{\delta}_F \mid_{I_p} \neq \mathbf{1}, \qquad \overline{\delta}_G \mid_{I_p} \neq \mathbf{1}.
\end{align*}
Here, $I_p$ denotes the inertia subgroup inside the decomposition group $\Gal{\overline{\Q}_p}{\Q_p}$. Analogous to the short exact sequences in (\ref{seq-dim-2}), we can now form the following $\Gal{\overline{\Q}_p}{\Q_p}$-equivariant short exact sequence of free $R_{F,G}[[\Gamma]]$-modules:
\begin{align*}
0 \rightarrow \underbrace{\Fil^+L_3}_{\text{Rank}=2} \rightarrow \underbrace{L_3}_{\text{Rank}=4} \rightarrow \underbrace{\frac{L_3}{\Fil^+L_3}}_{\text{Rank}=2} \rightarrow 0,
\end{align*}
where
\begin{align*}
&  \qquad  \Fil^+ L_3 : = \Hom_{R_{F,G}} \left(L_F \otimes_{R_F} R_{F,G} , \ \Fil^+ L_G \otimes_{R_G} R_{F,G} \right) \otimes_{R_{F,G}} R_{F,G}[[\Gamma]](\kappa^{-1}).
\end{align*}
We would like to emphasize the following isomorphisms of $R_{F,G}$ and $R_{f,G}$-modules respectively.
\begin{align}
L_2 \cong  L_3 \otimes_{R_{F,G}[[\Gamma]]} R_{F,G}, \qquad  \qquad L_1 \cong  L_2 \otimes_{R_{F,G}} R_{f,G}.
\end{align}
Once again, analogous to the short exact sequences in (\ref{seq-dim-2}), we can now form the following $\Gal{\overline{\Q}_p}{\Q_p}$-equivariant short exact sequence of free $R_{F,G}$ and $R_{f,G}$ modules respectively:
\begin{align*}
0 \rightarrow \underbrace{\Fil^+L_2}_{\text{Rank}=2} \rightarrow \underbrace{L_2}_{\text{Rank}=4} \rightarrow \underbrace{\frac{L_2}{\Fil^+L_2}}_{\text{Rank}=2} \rightarrow 0, \quad 0 \rightarrow \underbrace{\Fil^+L_1}_{\text{Rank}=2} \rightarrow \underbrace{L_1}_{\text{Rank}=4} \rightarrow \underbrace{\frac{L_1}{\Fil^+L_1}}_{\text{Rank}=2} \rightarrow 0,
\end{align*}
where
\begin{align*}
&  \qquad && \Fil^+ L_2  := \Fil^+L_3 \otimes_{R_{F,G}[[\Gamma]]} R_{F,G}, \qquad  \Fil^+ L_1 := \Fil^+ L_2 \otimes_{R_{F,G}} R_{f,G}.
\end{align*}
To the Galois representations $\rho_3$, $\rho_2$ and $\rho_1$, we can now associate the following discrete modules:
\begin{align*}
&D_{\rho_3} = L_3 \otimes_{R_{F,G}[[\Gamma]]} \hat{R_{F,G}[[\Gamma]]}, \quad && \Fil^+D_{\rho_3} = \Fil^+L_3 \otimes_{R_{F,G}[[\Gamma]]} \hat{R_{F,G}[[\Gamma]]} \\ & D_{\rho_2} = L_2 \otimes_{R_{F,G}} \hat{R_{F,G}}, \quad  &&\Fil^+D_{\rho_2} = \Fil^+L_2 \otimes_{R_{F,G}} \hat{R_{F,G}} \\ &D_{\rho_1} = L_1 \otimes_{R_{f,G}} \hat{R_{f,G}}, \quad &&\Fil^+D_{\rho_1} = \Fil^+L_1 \otimes_{R_{f,G}} \hat{R_{f,G}}.
\end{align*}

We now define the (discrete) non-primitive Selmer groups associated to the Galois representations $\rho_3$, $\rho_2$ and $\rho_1$ respectively. Since we do not discuss primitive Selmer groups in this paper, we will refer to the non-primitive Selmer groups simply as Selmer groups (without the prefix ``non-primitive''). Let $i \in \{1,2,3\}$.
\begin{align*}
\Sel_{\rho_i}(\Q) &:= \ker \left( H^1(G_\Sigma, D_{\rho_i}) \xrightarrow {\phi_{\rho_i}^{\Sigma_0}} H^1\left(I_p,\frac{D_{\rho_i}}{\Fil^+D_{\rho_i}} \right)^{\Gamma_p} \right).
\end{align*}
Here, $\Gamma_p = \frac{\Gal{\overline{\Q}_p}{\Q_p}}{I_p}$. Throughout this paper, we will suppose that the following hypothesis holds.
\begin{align} \label{tor}
\tag{$\mathrm{TOR}$} \text{Rank}_{R_{F,G}[[\Gamma]]} \left( \Sel_{\rho_3}(\Q)^\vee \right) &=0, \\ \notag \text{Rank}_{R_{F,G}} \left( \Sel_{\rho_2}(\Q)^\vee \right) &=0, \\ \notag \text{Rank}_{R_{f,G}} \left( \Sel_{\rho_1}(\Q)^\vee \right) &=0.
\end{align}

Let us now discuss the proofs of Theorems \ref{algebra-3-2} and \ref{specialization-3-2} along with the proofs of  Theorems \ref{algebra-2-1} and \ref{specialization-2-1}. These proofs will utilize the control theorems and specialization results developed in our earlier work \cite{bharathwaj2016algebraic}.  The control theorems are developed in Section 6 of \cite{bharathwaj2016algebraic} (see Proposition 6.2 in \cite{bharathwaj2016algebraic}). The specialization results are developed Section 5 of \cite{bharathwaj2016algebraic} (see Proposition 5.2 in \cite{bharathwaj2016algebraic}). We will treat those results as black boxes. We summarize the results of \cite{bharathwaj2016algebraic} in section \ref{towards-the-proof}.

\section{Proofs of Theorem \ref{algebra-3-2} and \ref{specialization-3-2}} \label{proof-3-2}

One can use Proposition 6.2 in Section 6 of \cite{bharathwaj2016algebraic} to establish a control theorem relating $\Sel_{\rho_3}(\Q)$ to $\Sel_{\rho_2}(\Q)$ (see Section \ref{towards-the-proof}).  We have the following equality of divisors in $R_{F,G}$:
 \begin{align} \label{control-eqn-previous-3-2}
&\Div \left( \Sel_{\rho_3}(\Q)^\vee \otimes_{R_{F,G}[[\Gamma]]} \R_{F,G} \right) + \Div \bigg( \Tor_{1}^{R_{F,G}[[\Gamma]]} \left( R_{F,G}, H^0(G_{\Sigma,D_{\rho_3}})^\vee \right)\bigg)  \\ = \notag \  & \Div \left( \Sel_{\rho_2}(\Q)^\vee \right) + \Div \left( \Tor_1^{R_{F,G}[[\Gamma]]} \left( R_{F,G}, H^0\left(I_p,\frac{D_{\rho_3}}{\Fil^+D_{\rho_3}}\right)^\vee \right)_{\Gamma_p} \right) .
\end{align}

We will view $R_{F,G}$ as an $R_{F,G}[[\Gamma]]$-module via the map $\pi_{3,2}:R_{F,G}[[\Gamma]] \rightarrow R_{F,G}$. As a topological group, $\Gamma \cong \Z_p$. Let's choose a topological generator $\gamma_0$ of $\Gamma$. The prime ideal $\ker(\pi_{3,2})$ is generated by $\gamma_0-1$ as an $R_{F,G}[[\Gamma]]$-module. Consider the short exact sequence $0 \rightarrow R_{F,G}[[\Gamma]] \xrightarrow {\gamma_0-1}  R_{F,G}[[\Gamma]] \rightarrow R_{F,G} \rightarrow 0$ of $R_{F,G}[[\Gamma]]$-modules. Taking the tensor product with the $R_{F,G}[[\Gamma]]$-module $H^0(G_{\Sigma},D_{\rho_3})^\vee$ over the ring $R_{F,G}[[\Gamma]]$ gives us following isomorphism:
\begin{align} \label{tor-iso-3-2}
\Tor_{1}^{R_{F,G}[[\Gamma]]} \left( R_{F,G}, \ H^0(G_{\Sigma},D_{\rho_3})^\vee \right) \cong H^0(G_\Sigma, D_{\rho_3})^\vee[\ker(\pi_{3,2})].
\end{align}
Lemma \ref{loc-tor-3-2} establishes the following isomorphism of $R_{F,G}$-modules:
\begin{align} \label{tor-iso-local-3-2}
\Tor_1^{R_{F,G}[[\Gamma]]} \left( R_{F,G}, \ H^0\left(I_p,\frac{D_{\rho_3}}{\Fil^+D_{\rho_3}}\right)^\vee \right)_{\Gamma_p} \cong \frac{R_{F,G}}{\left(1-\frac{i_F(a_p)}{i_G(b_p)}\right)}.
\end{align}
Combining equations (\ref{control-eqn-previous-3-2}), (\ref{tor-iso-3-2}), and (\ref{tor-iso-local-3-2}) completes the proof of Theorem \ref{algebra-3-2}. \\

Now suppose that the main conjecture \ref{MC3} holds. That is, we have the following equality in the divisor group of $R_{F,G}[[\Gamma]]$:
\begin{align} \tag{MC-$\rho_3$}  \Div \left(\theta_3^{\Sigma_0} \right)   =\Div \left(\Sel_{\rho_3}(\Q)^\vee\right) - \Div \left( H^0(G_\Sigma, D_{\rho_3})^\vee \right).
\end{align}
We will later outline in Section \ref{towards-the-proof} how the specialization result (Proposition 5.2 in Section 5 of \cite{bharathwaj2016algebraic}) lets us obtain the following equality of divisors in $R_{F,G}$:
\begin{align} \label{specialization-eqn-3-2}  \Div \left(\pi_{3,2}\left(\theta_3^{\Sigma_0} \right)\right) & =\Div \left(\Sel_{\rho_3}(\Q)^\vee \otimes_{R_{F,G}[[\Gamma]]} R_{F,G}\right) + \\ \notag & +  \Div \bigg( \Tor_{1}^{R_{F,G}[[\Gamma]]} \left( R_{F,G}, H^0(G_{\Sigma},D_{\rho_3})^\vee \right)\bigg)  - \Div \left( H^0(G_\Sigma, D_{\rho_2})^\vee \right).
\end{align}
Theorem \ref{analysis-3-2} establishes the following equality in the divisor group of $R_{F,G}$:
\begin{align} \label{analysis-eqn-3-2}
\Div \left(\pi_{3,2}\left(\theta_3^{\Sigma_0} \right)\right)   = \Div\left(\theta^{\Sigma_0}_{\rho_2}\right) + \Div \left( 1-\frac{i_F(a_p)}{i_G(b_p)}\right).
\end{align}
Theorem \ref{algebra-3-2} along with equation (\ref{tor-iso-3-2}) establishes the following equality in the divisor group of $R_{F,G}$:
\begin{align} \label{algebra-eqn-3-2}
 \Div \left( \Sel_{\rho_3}(\Q)^\vee \otimes_{R_{F,G}[[\Gamma]]} \R_{F,G} \right) &+ \Div \bigg(\Tor_{1}^{R_{F,G}[[\Gamma]]} \left( R_{F,G}, \ H^0(G_{\Sigma},D_{\rho_3})^\vee \right)\bigg)   \\ \notag &=  \Div \left( \Sel_{\rho_2}(\Q)^\vee \right) + \Div \left( 1-\frac{i_F(a_p)}{i_G(b_p)} \right).
\end{align}

Now combining equations (\ref{specialization-eqn-3-2}), (\ref{analysis-eqn-3-2}) and (\ref{algebra-eqn-3-2}), we have the following equality of divisors in $R_{F,G}$:
\begin{align}  \tag{MC-$\rho_2$}
\Div \left(\theta_2^{\Sigma_0} \right)   =\Div \left(\Sel_{\rho_2}(\Q)^\vee\right) - \Div \left( H^0(G_\Sigma, D_{\rho_2})^\vee \right).
\end{align}
This completes the proof of Theorem \ref{specialization-3-2}.

\section{Proofs of Theorem \ref{algebra-2-1} and \ref{specialization-2-1}} \label{proof-2-1}

The proofs of  Theorem \ref{algebra-2-1} and \ref{specialization-2-1} will be similar to the proofs of  Theorem \ref{algebra-3-2} and \ref{specialization-3-2}. Proposition 6.2 in Section 6 of \cite{bharathwaj2016algebraic} will allow us to establish a control theorem relating $\Sel_{\rho_2}(\Q)^\vee \otimes_{R_{F,G}}R_{f,G}$ to $\Sel_{\rho_1}(\Q)^\vee$.  We have the following equality of divisors in $R_{f,G}$:
\begin{align} \label{control-eqn-previous-2-1}
&\Div \left( \Sel_{\rho_2}(\Q)^\vee \otimes_{R_{F,G}} \R_{f,G} \right) + \Div \bigg( \Tor_{1}^{R_{F,G}} \left( R_{f,G}, H^0(G_{\Sigma,D_{\rho_2}})^\vee \right)\bigg)  \\ = \notag \  & \Div \left( \Sel_{\rho_1}(\Q)^\vee \right) + \Div \left( \Tor_1^{R_{F,G}} \left( R_{f,G}, H^0\left(I_p,\frac{D_{\rho_2}}{\Fil^+D_{\rho_2}}\right)^\vee \right)_{\Gamma_p} \right) .
\end{align}

Lemma \ref{local-tor-torsion} establishes the following equality:
\begin{align} \label{tor-iso-local-2-1}
\Tor_1^{R_{F,G}} \left( R_{f,G}, \ H^0\left(I_p,\frac{D_{\rho_2}}{\Fil^+D_{\rho_2}}\right)^\vee \right) =0.
\end{align}
Combining equations (\ref{control-eqn-previous-2-1}) and (\ref{tor-iso-local-2-1}) completes the proof of Theorem \ref{algebra-2-1}. \\

Now suppose that the main conjecture \ref{MC2} holds. That is, we have the following equality in the divisor group of $R_{F,G}$:
\begin{align} \tag{MC-$\rho_2$}  \Div \left(\theta_2^{\Sigma_0} \right)   =\Div \left(\Sel_{\rho_2}(\Q)^\vee\right) - \Div \left( H^0(G_\Sigma, D_{\rho_2})^\vee \right).
\end{align}
As before, we will later outline in Section \ref{towards-the-proof} how the specialization result (Proposition 5.2 in Section 5 of \cite{bharathwaj2016algebraic}) lets us obtain the following equality of divisors in $R_{f,G}$:
\begin{align} \label{specialization-eqn-2-1} \Div \left(\pi_{2,1}\left(\theta_2^{\Sigma_0} \right)\right)  &= \Div \left(\Sel_{\rho_2}(\Q)^\vee \otimes_{R_{F,G}} R_{f,G}\right)  \\ \notag & +  \Div \bigg( \Tor_{1}^{R_{F,G}} \left( R_{f,G}, H^0(G_{\Sigma},D_{\rho_2})^\vee \right)\bigg)   - \Div \left( H^0(G_\Sigma, D_{\rho_1})^\vee \right).
\end{align}
Theorem \ref{analysis-2-1} establishes the following equality in the divisor group of $R_{f,G}$:
\begin{align} \label{analysis-eqn-2-1}
\Div \left(\pi_{2,1}\left(\theta_2^{\Sigma_0} \right)\right)   = \Div\left(\theta^{\Sigma_0}_{\rho_1}\right).
\end{align}
Theorem \ref{algebra-2-1} establishes the following equality in the divisor group of $R_{f,G}$:
\begin{align} \label{algebra-eqn-2-1}
 \Div \left( \Sel_{\rho_2}(\Q)^\vee \otimes_{R_{F,G}} \R_{f,G} \right) &+ \Div \bigg(\Tor_{1}^{R_{F,G}} \left( R_{F,G}, \ H^0(G_{\Sigma},D_{\rho_2})^\vee \right)\bigg)  \\ & \notag =  \Div \left( \Sel_{\rho_1}(\Q)^\vee \right).
\end{align}

Now combining equations (\ref{specialization-eqn-2-1}), (\ref{analysis-eqn-2-1}) and (\ref{algebra-eqn-2-1}), we have the following equality of divisors in $R_{f,G}$:
\begin{align}  \tag{MC-$\rho_1$}
\Div \left(\theta_1^{\Sigma_0} \right)   =\Div \left(\Sel_{\rho_1}(\Q)^\vee\right) - \Div \left( H^0(G_\Sigma, D_{\rho_1})^\vee \right).
\end{align}
This completes the proof of Theorem \ref{specialization-2-1}.

\section{Towards the control theorems and specialization results} \label{towards-the-proof}

The hypothesis \ref{tor} plays a crucial role in establishing many structural properties of the Galois cohomology groups and Selmer groups. The control theorems and specialization results rely heavily on Greenberg's foundational works (\cite{MR2290593}, \cite{greenberg2010surjectivity} and \cite{greenberg2014pseudonull}). In these works, the Weak Leopoldt conjecture often comes into play. Let $i \in \{1,2,3\}$. We consider the following group:
\begin{align*}
\Sha^2\left(D_{\rho_i}\right) := \ker\left(H^2(G_\Sigma,D_{\rho_i}) \rightarrow \prod_{\nu \in \Sigma} H^2(\Gal{\overline{\Q}_\nu}{\Q_\nu},D_{\rho_i}) \right).
\end{align*}
The validity of \ref{tor} ensures us that the following statement  \ref{weak-leo} holds:
\begin{enumerate}[style=sameline, leftmargin=2.5cm, style=sameline, align=left,label=\textsc{Weak Leo}, ref=\textsc{Weak Leo}]
\item \label{weak-leo} $\Sha^2(D_{\rho_3})^\vee$, $\Sha^2(D_{\rho_2})^\vee$ and $\Sha^2(D_{\rho_1})^\vee$  are torsion $R_{F,G}[[\Gamma]]$, $R_{F,G}$ and $R_{f,G}$ modules respectively.
\end{enumerate}
 For each $i \in \{1,2,3\}$ and each $\nu \in \Sigma_0$, the validity of \ref{tor} also lets us deduce the following equalities:
\begin{align*}
H^2(\Gal{\overline{\Q}_\nu}{\Q_\nu},D_{\rho_i})=0, \ \ \forall \nu \in \Sigma_0; \qquad H^2\left(\Gal{\overline{\Q}_p}{\Q_p},\frac{D_{\rho_i}}{\Fil^+D_{\rho_i}}\right)=0.\end{align*}
The Galois representation $\rho_3$ is related to the
cyclotomic deformation of $\rho_2$ (see Section 3 of \cite{greenberg1994iwasawa} for a description of the cyclotomic deformation). Equation (\ref{more-h2-vanishing-cyc}) can be deduced from the arguments given in Section 5 of \cite{greenberg2010surjectivity}. The arguments involve combining local duality, Proposition 3.10 in \cite{MR2290593} along with results from Section 3 in \cite{greenberg1994iwasawa}. These arguments are also described in Section 4 of \cite{bharathwaj2016algebraic}.
\begin{align} \label{more-h2-vanishing-cyc}
H^2\left(\Gal{\overline{\Q}_p}{\Q_p},D_{\rho_3}\right) = H^2\left(\Gal{\overline{\Q}_p}{\Q_p},\Fil^+D_{\rho_3}\right)=0.
\end{align}
We  will use local duality to establish a similar result for $\rho_2$ and $\rho_1$. See Lemma \ref{van-h2-rho-2-1}.
 \begin{align*}
H^2\left(\Gal{\overline{\Q}_p}{\Q_p},D_{\rho_2}\right) = H^2\left(\Gal{\overline{\Q}_p}{\Q_p},\Fil^+D_{\rho_2}\right)=0, \\
H^2\left(\Gal{\overline{\Q}_p}{\Q_p},D_{\rho_1}\right) = H^2\left(\Gal{\overline{\Q}_p}{\Q_p},\Fil^+D_{\rho_1}\right)=0.
\end{align*}
These observations, along with Corollary \ref{global-h2-vanish}, establish the following statement:
\begin{enumerate}[style=sameline, leftmargin=2cm, style=sameline, align=left,label=\textsc{Van-H2}, ref=\textsc{Van-H2}]
\item \label{van-h2} Let $i \in \{1,2,3\}$. Let $\nu \in \Sigma_0$. We have
\begin{align*}
H^2(G_\Sigma,D_{\rho_i}) & =H^2(\Gal{\overline{\Q}_\nu}{\Q_\nu},D_{\rho_i}) = H^2\left(\Gal{\overline{\Q}_p}{\Q_p},D_{\rho_i}\right)= \\ &= H^2\left(\Gal{\overline{\Q}_p}{\Q_p},\Fil^+D_{\rho_i}\right) = H^2\left(\Gal{\overline{\Q}_p}{\Q_p},\frac{D_{\rho_i}}{\Fil^+D_{\rho_i}}\right)=0.
\end{align*}
\end{enumerate}

We will now summarize the arguments involved in deducing the specialization results given in equations (\ref{specialization-eqn-3-2}) and (\ref{specialization-eqn-2-1}). Let $X_{3}=\Sel_{\rho_3}(\Q)^\vee$ and $X_2=\Sel_{\rho_2}(\Q)^\vee$. Given the divisor $\Div(X_3)$ in $R_{F,G}[[\Gamma]]$  (and $\Div(X_2)$ in $R_{F,G}$ respectively), we would like to find the divisor $\Div\left(X_3 \otimes_{R_{F,G}[[\Gamma]]} R_{F,G}\right)$ in $R_{F,G}$ (and $\Div\left(X_2 \otimes_{R_{F,G}}R_{f,G}\right)$ in  $R_{f,G}$ respectively). For this purpose, we will use Proposition 5.2 in Section 5 of \cite{bharathwaj2016algebraic}. One of the hypotheses involved in the Proposition there concerns the following statement:

\begin{enumerate}[style=sameline, leftmargin=2cm, style=sameline, align=left,label=\textsc{Fin-Proj}, ref=\textsc{Fin-Proj}]
\item \label{fin-proj} For every height two  prime ideal $Q_{3,2}$ in $R_{F,G}[[\Gamma]]$ containing $\ker(\pi_{3,2})$, the projective dimension of $\left(\Sel_{\rho_3}(\Q)^\vee\right)_{Q_{3,2}}$ is finite. \\
For every height two  prime ideal $Q_{2,1}$ in $R_{F,G}$ containing $\ker(\pi_{2,1})$, the projective dimension of $\left(\Sel_{\rho_2}(\Q)^\vee\right)_{Q_{2,1}}$ is finite.
\end{enumerate}

Let us suppose that \ref{fin-proj} holds (we will briefly discuss later why \ref{fin-proj} holds). Another hypothesis involved in Proposition 5.2 in section 5 of \cite{bharathwaj2016algebraic} requires us to show that the maximal $R_{F,G}[[\Gamma]]$ (and $R_{F,G}$ respectively) pseudo-null submodule of $X_3$ (and $X_2$ respectively) is trivial. For this purpose, we will first define the strict (non-primitive) Selmer group, for each $i \in \{1,2,3\}$, as follows:
\begin{align*}
\text{Sel}^{\Sigma_0,str}_{\rho_i}(\Q) &:= \ker \left( H^1(G_\Sigma, D_{\rho_i}) \xrightarrow {\phi_{\rho_i}^{\Sigma_0,str}} H^1\left(\Gal{\overline{\Q}_p}{\Q_p},\frac{D_{\rho_i}}{\Fil^+D_{\rho_i}} \right) \right).
\end{align*}
Combining Proposition 4.2.1 and Proposition 4.3.2 from \cite{greenberg2014pseudonull} with the statements \ref{tor} and \ref{van-h2}, we obtain that the $R_{F,G}[[\Gamma]]$-module $\text{Sel}^{\Sigma_0,str}_{\rho_3}(\Q)^\vee$ and the $R_{F,G}$-module $\text{Sel}^{\Sigma_0,str}_{\rho_2}(\Q)^\vee$ have no non-trivial pseudo-null submodules. Also for each $i \in \{1,2,3\}$, one has the following short exact sequence relating $\text{Sel}^{\Sigma_0,str}_{\rho_i}(\Q)$ to $\Sel_{\rho_i}(\Q)$:
\begin{align*}
0 \rightarrow  \text{Sel}^{\Sigma_0,str}_{\rho_i}(\Q) \xrightarrow {\alpha_i} \Sel_{\rho_i}(\Q) \rightarrow H^1\left(\Gamma_p,H^0\left(I_p,\frac{D_{\rho_i}}{\Fil^+D_{\rho_i}}\right)\right)\rightarrow  0.
\end{align*}
In the cases we are interested in, Lemma \ref{strict-selmer-difference} tells us that $H^1\left(\Gamma_p,H^0\left(I_p,\frac{D_{\rho_i}}{\Fil^+D_{\rho_i}}\right)\right) =0$, for each $i \in \{1,2,3\}$. This lets us conclude that the map $\alpha_i$ is an isomorphism for each $i \in \{1,2,3\}$. We have the following statement:

\begin{enumerate}[style=sameline, leftmargin=2cm, style=sameline, align=left,label=\textsc{No-PN}, ref=\textsc{No-PN}]
\item \label{no-pn} The $R_{F,G}[[\Gamma]]$-module $\Sel_{\rho_3}(\Q)^\vee$ has no non-trivial pseudo-null submodules.\\
The $R_{F,G}$-module $\Sel_{\rho_2}(\Q)^\vee$ has no non-trivial pseudo-null submodules.
\end{enumerate}
\begin{comment}
Here is the flowchart summarizing the logical flow in the arguments used to deduce \ref{no-pn}.
\begin{tikzpicture}[node distance=1.5cm, auto]
\node[punkt,rounded corners=0pt, very thick](no-pn){\ref{no-pn}};
\node[punkt, text width= 15 em, above of = no-pn](greenberg-no-pn){Prop 4.2.1 + Prop 4.3.2 in \cite{greenberg2014pseudonull}}edge[pil](no-pn);
\node[punkt, text width=12.5em, left  = 2 cm of greenberg-no-pn](){\ref{tor}+\ref{weak-leo}+\ref{van-h2}}edge[pil](greenberg-no-pn);
\node[punkt, right   = 1.5 cm of no-pn](strict-difference){Lemma \ref{strict-selmer-difference}}edge[pil](no-pn);
\end{tikzpicture}
\end{comment}
Let $Q_{3,2}$ (and $Q_{2,1}$ respectively) be a height two prime ideal in $R_{F,G}[[\Gamma]]$ (and $R_{F,G}$ respectively) containing $\ker(\pi_{3,2})$ (and $\ker(\pi_{2,1})$ respectively). \ref{no-pn} allows us to conclude that
\begin{align} \label{depth}
\depth_{\left(R_{F,G}[[\Gamma]]\right)_{Q_{3,2}}} \ (X_3)_{Q_{3,2}} \geq 1, \ \  \depth_{\left(R_{F,G}\right)_{Q_{2,1}}} \ (X_2)_{Q_{2,1}} \geq 1.
\end{align}

The hypotheses \ref{fin-proj} allows us to use the Auslander-Buchsbaum formula. Combining equation (\ref{depth}) along with the Auslander-Buchsbaum formula  over the two-dimensional local rings $\left(R_{F_{G,}}[[\Gamma]]\right)_{Q_{3,2}}$ and $\left(R_{F,G}\right)_{Q_{2,1}}$ allows us to make the following conclusion involving projective~dimensions:
\begin{align} \label{pd-auslaunder}
\pd_{\left(R_{F,G}[[\Gamma]\right)_{Q_{3,2}}} (X_3)_{Q_{3,2}} \leq 1, \qquad \pd_{\left(R_{F,G}\right)_{Q_{2,1}}} (X_2)_{Q_{2,1}} \leq 1.
\end{align}

This allows us to write down projective resolutions for $\left(X_3\right)_{Q_{3,2}}$ (and $\left(X_2\right)_{Q_{2,1}}$ respectively) over $\left(R_{F,G}[[\Gamma]]\right)_{Q_{3,2}}$ (and $\left(R_{F,G}\right)_{Q_{2,1}}$ respectively). Let $A_3 = \left(R_{F,G}[[\Gamma]]\right)_{Q_{3,2}}$ and $A_2=\left(R_{F,G}\right)_{Q_{2,1}}$. Both the rings $A_3$ and $A_2$ have Krull dimension two. We have the following short exact sequences:

\begin{align*}
0 \rightarrow A_3^{n_3} \xrightarrow {\omega_3} A_3^{n_3} \rightarrow \left(X_3\right)_{\q_3} \rightarrow 0, \quad 0 \rightarrow A_2^{n_2} \xrightarrow {\omega_2} A_2^{n_2} \rightarrow \left(X_2\right)_{\q_2} \rightarrow 0.
\end{align*}

Here, we have matrices $\omega_3 \in M_{n_3}(A_3)$ and $\omega_2 \in M_{n_2}(A_2)$. Consider the rings $B_2 := A_3~\otimes_{R_{F,G}[[\Gamma]]}~R_{F,G}$ and $B_1 := A_2 \otimes_{R_{F,G}} R_{f,G}$, both of which have Krull dimension one. Taking the tensor product with the $R_{F,G}[[\Gamma]]$-module $R_{F,G}$ (and $R_{F,G}$-module $R_{f,G}$ respectively), we obtain the following short exact sequences:
\begin{align*}
& 0 \rightarrow B_2^{n_3} \xrightarrow {\pi_{3,2}(\omega_3)} B_2^{n_3} \rightarrow X_3 \otimes_{R_{F,G}[[\Gamma]]} B_2 \rightarrow 0, \\ & 0 \rightarrow B_1^{n_2} \xrightarrow {\pi_{2,1}(\omega_2)} B_1^{n_2} \rightarrow X_2 \otimes_{R_{F,G}}B_1 \rightarrow 0.
\end{align*}

If we let $\len_A(M)$ denote the length of an $A$-module $M$, we have
\begin{align*}
\len_{B_2} \left(\frac{B_2}{(\pi_{3,2}(\det(\omega_3)))}\right)  &= \len_{B_2} \left(X_3 \otimes_{R_{F,G}[[\Gamma]]} B_2\right), \\ \len_{B_1} \left(\frac{B_1}{(\pi_{2,1}(\det(\omega_2)))}\right)  &= \len_{B_1} \left(X_2 \otimes_{R_{F,G}} B_1\right).
\end{align*}

It is this observation that we will maneuver to obtain information about the divisors $\Div\left(X_3 \otimes_{R_{F,G}[[\Gamma]]} R_{F,G}\right)$ and $\Div\left(X_2 \otimes_{R_{F,G}} R_{f,G}\right)$ in $R_{F,G}$ and $R_{f,G}$ respectively. \\

In what follows, we will keep the isomorphism $\Sel_{\rho_i}(\Q_\infty) \cong \text{Sel}^{\Sigma_0,str}_{\rho_i}(\Q_\infty)$ in mind to establish~\ref{fin-proj}. We can combine Corollary 3.2.3 in \cite{greenberg2010surjectivity} with \ref{tor} and \ref{van-h2} to establish the following~statement:
\begin{enumerate}[style=sameline, leftmargin=2cm, style=sameline, align=left,label=\textsc{Surj}, ref=\textsc{Surj}]
\item \label{surj} The global-to-local maps $\phi^{\Sigma_0}_{\rho_3}$,  $\phi^{\Sigma_0}_{\rho_2}$ and $\phi^{\Sigma_0}_{\rho_1}$ defining the Selmer groups $\Sel_{\rho_3}(\Q)$, $\Sel_{\rho_2}(\Q)$  and $\Sel_{\rho_1}(\Q)$  respectively are surjective.
\end{enumerate}
Let $Q_{3,2}$ (and $Q_{2,1}$ respectively)  be a height two prime ideal in $R_{F,G}[[\Gamma]]$ (and $R_{F,G}$ respectively) containing $\ker(\pi_{3,2})$ (and $\ker(\pi_{2,1})$ respectively). To establish \ref{fin-proj}, it will suffice to prove that the projective dimensions of the $\left(R_{F,G}[[\Gamma]]\right)_{Q_{3,2}}$-modules
\begin{align*}
H^1\left(G_\Sigma, D_{\rho_3}\right)^\vee \otimes_{R_{F,G}[[\Gamma]]} R_{F,G}[[\Gamma]]_{Q_{3,2}}, \\ H^1\left(\Gal{\overline{\Q}_p}{\Q_p}, \frac{D_{\rho_3}}{\Fil^+ D_{\rho_3}}\right)^\vee \otimes_{R_{F,G}[[\Gamma]]} (R_{F,G}[[\Gamma]])_{Q_{3,2}},
\end{align*}
and the projective dimensions of the $\left(R_{F,G}\right)_{Q_{2,1}}$-modules
\begin{align*}
H^1\left(G_\Sigma, D_{\rho_2}\right)^\vee \otimes_{R_{F,G}} \left(R_{F,G}\right)_{Q_{2,1}}, \\ H^1\left(\Gal{\overline{\Q}_p}{\Q_p}, \frac{D_{\rho_2}}{\Fil^+ D_{\rho_2}}\right)^\vee \otimes_{R_{F,G}} (R_{F,G})_{Q_{2,1}},
\end{align*}
respectively are finite. The hypothesis \ref{van-h2} allows us to apply Proposition 5.5 in \cite{bharathwaj2016algebraic}. It will now suffice to prove the following statements \ref{reg-rho-3} and \ref{reg-rho-2}:

\begin{enumerate}[style=sameline, leftmargin=2cm, style=sameline, align=left,label=\textsc{Reg-$\rho_3$}, ref=\textsc{Reg-$\rho_3$}]
\item \label{reg-rho-3} For every height two prime ideal $Q_{3,2}$ containing $\ker(\pi_{3,2})$ and in the support of $H^0(G_{\Sigma},D_{\rho_3})^\vee$ or $H^0\left(\Gal{\overline{\Q}_p}{\Q_p},\frac{D_{\rho_3}}{\Fil^+D_{\rho_3}}\right)^\vee$, the 2-dimensional local ring $\left(R_{F,G}[[\Gamma]]\right)_{Q_{3,2}}$ is regular.
\end{enumerate}

\begin{enumerate}[style=sameline, leftmargin=2cm, style=sameline, align=left,label=\textsc{Reg-$\rho_2$}, ref=\textsc{Reg-$\rho_2$}]
\item \label{reg-rho-2} For every height two prime ideal $Q_{2,1}$ containing $\ker(\pi_{2,1})$ and in the support of $H^0(G_{\Sigma},D_{\rho_2})^\vee$,
\begin{itemize}
\item the $2$-dimensional local ring $\left(R_{F,G}\right)_{Q_{2,1}}$ is regular;
\item the $1$-dimensional ring $\left(\frac{R_{F,G}}{\ker(\pi_{2,1})}\right)_{Q_{2,1}}$ is integrally closed (and hence regular too).
\end{itemize}
The projective dimension of the $R_{F,G}$-module $H^0\left(\Gal{\overline{\Q}_p}{\Q_p}, \frac{D_{\rho_2}}{\Fil^+D_{\rho_2}}\right)^\vee$  equals  one.
\end{enumerate}
Proposition \ref{p-regular-prop}, Lemma \ref{p-invertible} and Lemma \ref{loc-tor-3-2} establish \ref{reg-rho-2}. Lemma 4.16 and Proposition 4.17 in \cite{bharathwaj2016algebraic} establish \ref{reg-rho-3}. \\

The control theorems that relate $\Sel_{\rho_3}(\Q)^\vee \otimes_{R_{F,G}[[\Gamma]]} R_{F,G}$ with $\Sel_{\rho_2}(\Q)^\vee$, given in equation (\ref{control-eqn-previous-3-2}), and $\Sel_{\rho_2}(\Q)^\vee \otimes_{R_{F,G}} R_{f,G}$ with $\Sel_{\rho_1}(\Q)^\vee$, given in equation (\ref{control-eqn-previous-2-1}), use Proposition 5.2 in \cite{bharathwaj2016algebraic}. The following flowchart summarizes the logical flow involved in those arguments: \\

\begin{center}
\begin{tikzpicture}[node distance=1.5cm, auto]
\node[punkt, text width=17em,very thick, rounded corners=0pt](control-thms){Control Theorems:  Eq. (\ref{control-eqn-previous-3-2}) and (\ref{control-eqn-previous-2-1})};

\node[punkt, above = 1 cm of control-thms](prop-control){Prop 6.2 in \cite{bharathwaj2016algebraic}}edge[pil](control-thms);
\node[punkt, left = 1cm of prop-control]{\ref{surj}} edge[pil](prop-control);
\node[punkt, text width=8em, right = 1cm of prop-control]{\ref{reg-rho-3}, \ref{reg-rho-2}} edge[pil](prop-control);

\node[ above of = prop-control](dummy+control) {};
\node[punkt,text width=12 em, left  = 1 cm of dummy+control] {Lemma \ref{p-invertible} + Lemma \ref{global-tor-torsion}} edge[pil](prop-control);
\node[punkt,text width=12 em, right  = 1 cm of dummy+control] {Lemma \ref{loc-tor-3-2} + Lemma \ref{local-tor-torsion}} edge[pil](prop-control);

\end{tikzpicture}
\end{center}

\begin{comment}

\begin{center}

\begin{tikzpicture}[node distance=1.5cm, auto]
\node[punkt,very thick, rounded corners=0pt,](fin-proj){\ref{fin-proj}};
\node[punkt, text width= 12.5em, above of = fin-proj](proj-previous){Prop 4.5 + Prop 4.6 in \cite{bharathwaj2016algebraic}}edge[pil](fin-proj);
\node[punkt, text width=5 em, above of = proj-previous](surj){\ref{surj}}edge[pil](proj-previous);
\node[punkt,  text width = 8 em, right = 2cm of surj](reg){\ref{reg-rho-2}, \ref{reg-rho-2}}edge[pil](proj-previous);
\node[punkt,  text width = 8 em, left = 2cm of surj](van){ \ref{van-h2}}edge[pil](proj-previous);
\end{tikzpicture}

\end{center}
\end{comment}

\subsection*{Calculations involving global Galois cohomology groups}

\begin{lemma} \label{reg-loc-2}
Let $Q$ be a height two prime ideal in $R_{F,G}$ satisfying the following properties:
\begin{itemize}
\item $p \notin Q$.
\item $i_F(\P_{F}) \subset Q$, for some classical height one prime ideal $\P_{F}$ of $R_F$ with weight $ \geq 2$.
\end{itemize}
Then, $(R_{F,G})_Q$ is a regular local ring with Krull dimension $2$.
\end{lemma}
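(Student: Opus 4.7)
The plan is to exhibit a regular system of parameters for $(R_{F,G})_Q$. The first step is to pin down $Q \cap R_F$: since $p$ lies in the maximal ideal $\mathfrak{m}_{R_F}$ but $p \notin Q$ by hypothesis, the contraction $Q \cap R_F$ is a prime of $R_F$ containing $P_F$ yet strictly smaller than $\mathfrak{m}_{R_F}$. As $R_F$ is a $2$-dimensional local domain and $P_F$ has height one, this forces $Q \cap R_F = P_F$, yielding a local homomorphism $(R_F)_{P_F} \to (R_{F,G})_Q$.

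Since $R_F$ is normal (being the integral closure of an irreducible component of a Hecke algebra) and $P_F$ has height one, $(R_F)_{P_F}$ is a DVR. I would pick $\varpi \in P_F$ whose image is a uniformizer, so $P_F \cdot (R_F)_{P_F} = \varpi \cdot (R_F)_{P_F}$. Exploiting that $R_G$ is free over $O[[x_G]]$ (it is a CM finite module over a regular local ring, hence free by Auslander--Buchsbaum), one has $R_{F,G} \cong R_F[[x_G]]^r$ as an $R_F$-module, so $R_{F,G}$ is $R_F$-flat. Flatness ensures that $\varpi$ is a non-zero-divisor in $(R_{F,G})_Q$ and that $\varpi \cdot (R_{F,G})_Q = P_F \cdot (R_{F,G})_Q$.

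The crux is then to show that $(R_{F,G})_Q / \varpi \cdot (R_{F,G})_Q$ is a DVR. I would identify this quotient with $\bigl((R_F/P_F) \otimes_O R_G\bigr)_{\bar Q}$; the ordinary tensor product suffices because $R_F/P_F$ is finite over $O$, since $P_F$ being classical of weight $\geq 2$ means its contraction to $O[[x_F]]$ is generated by a distinguished polynomial not containing $p$. Writing $T := R_F/P_F$ and using $p \notin \bar Q$, inverting $p$ does not affect the localization at $\bar Q$:
\[
\bigl(T \otimes_O R_G\bigr)_{\bar Q} \;=\; \bigl(T[1/p] \otimes_{K_O} R_G[1/p]\bigr)_{\bar Q},
\]
with $K_O := \mathrm{Frac}(O)$. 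Here $T[1/p] = \mathrm{Frac}(T)$ is a finite separable extension of $K_O$, while $R_G$ normal of Krull dimension $2$ makes $R_G[1/p]$ a Dedekind domain. Therefore $T[1/p] \otimes_{K_O} R_G[1/p]$ is étale over $R_G[1/p]$, hence a finite product of Dedekind domains; localizing at the maximal ideal corresponding to $\bar Q$ gives a DVR.

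Combining these observations, $\varpi$ is a non-zero-divisor in $(R_{F,G})_Q$ whose quotient is a DVR, so lifting a uniformizer of this quotient to some $\eta \in Q \cdot (R_{F,G})_Q$ yields $Q \cdot (R_{F,G})_Q = (\varpi, \eta)$, which gives regularity. The main obstacle I anticipate is the bookkeeping between the completed tensor product defining $R_{F,G}$ and the ordinary tensor products arising when taking the quotient by $P_F$: the key simplification is that finiteness of $R_F/P_F$ over $O$ collapses the completed tensor product into an ordinary one, and inverting $p$ (legitimate since $p \notin Q$) sidesteps the potential complication that $R_F/P_F$ need not itself be integrally closed.
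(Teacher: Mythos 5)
Your proof is correct, but it takes a genuinely different route from the paper's. The paper pins down $i_F^{-1}(Q)=\P_F$ exactly as you do, but then realizes $(R_{F,G})_Q$ as a localization of $\left(R_G[[x_F]]\right)_{Q_0}\otimes_{(O[[x_F]])_{\p_0}}(R_F)_{\P_F}$: regularity of $(R_G[[x_F]])_{Q_0}$ is imported from Lemma 1.16 of the earlier paper (using that $Q_0$ contains the monic polynomial $(1+x_F)^{n_1}-(1+p)^{n_2}$ but not $p$), regularity is then transported along the finite \'etale extension $(O[[x_F]])_{\p_0}\hookrightarrow (R_F)_{\P_F}$ --- this is Hida's Corollary 1.4 and is precisely where the weight $\geq 2$ hypothesis enters --- and finally a retraction argument identifies the localized base change with $(R_{F,G})_Q$. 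You instead use the criterion ``non-zero-divisor with regular quotient'': flatness of $R_F\to R_{F,G}$ (via $R_G$ being normal of dimension two, hence Cohen--Macaulay and $O[[x_G]]$-free, so $R_{F,G}\cong R_F[[x_G]]\otimes_{O[[x_G]]}R_G$; note this identification should be justified by the universal property of the completed tensor product, exactly as the paper does for the mirror-image isomorphism $R_{F,G}\cong R_G[[x_F]]\otimes_{O[[x_F]]}R_F$), the DVR $(R_F)_{\P_F}$ coming from normality of $R_F$, and regularity of the fibre ring $\bigl(\mathrm{Frac}(R_F/\P_F)\otimes_{K_O}R_G[1/p]\bigr)$, which is \'etale over the Dedekind domain $R_G[1/p]$ by characteristic-zero separability. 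What your approach buys: it avoids both the external Lemma 1.16 and Hida's \'etaleness of the weight map at classical points (indeed you never really use weight $\geq 2$, only that $\P_F$ is a height-one classical prime with $p\notin\P_F$, so that $R_F/\P_F$ is finite over $O$); what the paper's approach buys is that it reuses machinery already set up in the earlier work, which is needed elsewhere in the argument anyway. Two small points you should make explicit: the quotient $(R_{F,G})_Q/(\varpi)$ is a DVR rather than a field because $\varpi$ is a non-zero-divisor and $\dim(R_{F,G})_Q=\mathrm{ht}(Q)=2$, so the quotient has dimension exactly one; and the Krull-dimension-two assertion of the lemma is then immediate from $\mathrm{ht}(Q)=2$ together with the two-element generation of the maximal ideal.
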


\begin{proof}
We have the following natural inclusions:
\begin{align*}
i_1: O[[x_F]] \hookrightarrow R_F, \qquad i_F : R_F \hookrightarrow R_{F,G}, \qquad i_{F,G} : R_G[[x_F]] \hookrightarrow R_{F,G}.
\end{align*}
Let us define the following prime ideals:
\begin{align*}
\underbrace{\p_0}_{\text{height}=1} := i_{1}^{-1}(\P_{F}),\qquad  \underbrace{Q_0}_{\text{height}=2} := i_{F,G}^{-1}(Q).
\end{align*}
Since $p \notin Q$ and the Krull dimension of $R_F$ equals two, we have the equality $i_F^{-1}(Q)=\P_{F}$. Furthermore, since $\P_{F}$ is a classical height one prime ideal in $R_{F}$, the extension $$(O[[x_F]])_{\p_0} \hookrightarrow (R_F)_{\P_{F}}$$ of discrete valuation rings is finite and \'etale (see Corollary 1.4 in \cite{hida1986galois}). Let $$\RRR = \left(R_{G}[[x_F]]\right)_{Q_0} \otimes_{(O[[x_F]])_{\p_0}} (R_F)_{\P_{F}}.$$ We have the following commutative diagrams:
\begin{align*}
\xymatrix{
O[[x_F]] \ar[r]\ar[d]^{i_1}& R_{G}[[x_F]] \ar[d]^{i_{F,G}}  \\
R_F  \ar[r]^{i_F} & R_{F,G}
} \qquad \xymatrix{
(O[[x_F]])_{\p_0} \ar[r]\ar[d]& \left(R_{G}[[x_F]]\right)_{Q_0} \ar[d]  \\
(R_F)_{\P_{F}}  \ar[r]& \RRR
}
\end{align*} \'Etale morphisms are stable under base change (Proposition 3.22 in Chapter 4 of \cite{MR1917232}). So, the morphism $\left(R_{G}[[x_F]]\right)_{Q_0} \rightarrow  \RRR$
is also \'etale. Since $\P_{F}$ is a classical height one prime ideal in $R_{F}$, there exists integers $n_1$ and $n_2$ such that $(1+x_F)^{n_1}-(1+p)^{n_2}$ belongs to $\P_{F}$ (and hence belongs to $Q_0$ as well). The polynomial $(1+x_F)^{n_1}-(1+p)^{n_2}$ is a non-constant monic polynomial. Lemma 1.16 in \cite{bharathwaj2016algebraic} now forces $(R_{G}[[x_F]])_{Q_0}$ to be a regular local ring with Krull dimension $2$. Since the morphism $\left(R_{G}[[x_F]]\right)_{Q_0} \rightarrow  \RRR$ is \'etale, the ring $\RRR$ is regular too (Corollary 3.24 in Chapter 4 of \cite{MR1917232}). \\

The natural $(O[[x_F]])_{\p_0}$-algebra maps
\begin{align*}
\left(R_{G}[[x_F]]\right)_{Q_0} \hookrightarrow \left(R_{F,G}\right)_{Q}, \qquad (R_F)_{\P_{F}} \hookrightarrow \left(R_{F,G}\right)_{Q}
\end{align*}
give us a natural $(O[[x_F]])_{\p_0}$-algebra map $\RRR \rightarrow \left(R_{F,G}\right)_{Q}$. If we let $Q_1$ be the prime ideal in $\RRR$ that $Q$ pullbacks to, we get a natural map $\beta: \RRR_{Q_1} \rightarrow \left(R_{F,G}\right)_Q$ of local rings.  Using the fact that the completed tensor product $R_{F,G}$ satisfies the universal property of being a co-product in the category of complete semi-local Noetherian $O$-algebras, we can deduce the following natural isomorphism: \begin{align}\label{universal-coproduct}
R_{G}[[x_F]] \otimes_{O[[x_F]]} R_F \cong R_{F,G}.
\end{align}
Equation (\ref{universal-coproduct})   gives us a natural map $R_{F,G} \rightarrow \RRR$. We obtain the two following sequence of maps:
\begin{align*}
R_{F,G} \rightarrow \RRR_{Q_1} \xrightarrow {\beta} (R_{F,G})_Q, \qquad \left(R_{F,G}\right)_Q \xrightarrow {\beta'} \RRR_{Q_1} \xrightarrow {\beta} \left(R_{F,G}\right)_Q
\end{align*}

The second sequence of maps in the equation above is obtained by localizing the first sequence of maps with respect to the prime ideal (corresponding to) $Q$. The composition $\beta \circ \beta'$ is the identity map. Thus, the map $\beta$ must be surjective. In fact, $\beta$ turns out to be an isomorphism. To see this, it suffices to show that $\ker(\beta)$ equals $0$. Since the Krull dimensions of both $\RRR_{Q_1}$ and $(R_{F,G})_Q$ equal two, the height of the prime ideal $\ker(\beta)$ must equal  $0$. Since $\RRR_{Q_1}$ is a domain (it is in fact a regular local ring), $\ker(\beta)$ must equal $0$.   This completes the proof since $\RRR_{Q_1}$ is a regular local ring.
\end{proof}

\begin{lemma} \label{no-supp-p}
The $R_{f,G}$-module $H^0(G_{\Sigma},D_{\rho_1})^\vee$ is not supported at any height $1$ prime ideal of $R_{f,G}$ containing the prime number $p$. Consequently, the $R_{f,G}$-module $H^0(G_{\Sigma},D_{\rho_1})^\vee$ is torsion.
\end{lemma}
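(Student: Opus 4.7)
The plan is to exhibit, for every height one prime $P$ of $R_{f,G}$ containing $p$, an explicit element of the annihilator ideal $J := \mathrm{Ann}_{R_{f,G}}\bigl(H^0(G_\Sigma,D_{\rho_1})\bigr)$ that does not lie in $P$. Since any height one prime $P$ is itself a non-zero proper ideal, any element of $J\setminus P$ is automatically non-zero; thus the existence of such elements for even one such $P$ forces $J\neq 0$, which over the local domain $R_{f,G}$ is equivalent to $M := H^0(G_\Sigma,D_{\rho_1})^\vee$ being torsion. This is how I read the ``consequently'' in the statement.

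The starting point is Cramer's rule. For every $\sigma\in G_\Sigma$ and every $v\in H^0(G_\Sigma,D_{\rho_1})$, the identity $(\rho_1(\sigma)-1)v=0$ combined with the adjugate relation $\mathrm{adj}(\rho_1(\sigma)-1)\cdot(\rho_1(\sigma)-1)=\det(\rho_1(\sigma)-1)\cdot I_4$ in $M_4(R_{f,G})$ forces $\det(\rho_1(\sigma)-1)\cdot v=0$; this extends from the free module $L_1$ to the cofree module $D_{\rho_1}=L_1\otimes_{R_{f,G}}\widehat{R_{f,G}}$. Hence $\det(\rho_1(\sigma)-1)\in J$ for every $\sigma\in G_\Sigma$. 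Using $\rho_1\cong \rho_f^{\vee}\otimes\rho_G$, if $\rho_f(\sigma)$ has eigenvalues $\alpha,\beta$ and $\rho_G(\sigma)$ has eigenvalues $\gamma,\delta$ in a splitting extension of $R_{f,G}$, a short computation gives
\begin{equation*}
\det(\rho_1(\sigma)-1)\;=\;\bigl(\det\rho_f(\sigma)\bigr)^{-2}\cdot(\alpha-\gamma)(\alpha-\delta)(\beta-\gamma)(\beta-\delta),
\end{equation*}
and $(\det\rho_f(\sigma))^{-2}$ is a unit in $R_{f,G}$. The problem therefore reduces to finding, for each height one prime $P\ni p$, a $\sigma\in G_\Sigma$ such that none of the four eigenvalue differences $\alpha-\gamma,\alpha-\delta,\beta-\gamma,\beta-\delta$ lies in $P$.

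The main obstacle is producing such a $\sigma$. If the residual representations $\bar\rho_F$ and $\bar\rho_G$ are non-isomorphic, then \ref{residual-irr} together with Brauer--Nesbitt and Chebotarev density furnish an unramified Frobenius $\mathrm{Frob}_\ell$ (for some $\ell\notin\Sigma$) whose two pairs of residual eigenvalues are disjoint in $\overline{\mathbb{F}}_p$; the separation persists modulo $P$ because the eigenvalues of $\rho_f(\mathrm{Frob}_\ell)$ and $\rho_G(\mathrm{Frob}_\ell)$ in the integral closure of $R_{f,G}/P$ reduce modulo the maximal ideal to the residual ones. The delicate case is $\bar\rho_F\cong\bar\rho_G$, where $\bar\rho_1$ genuinely carries a trivial Galois subrepresentation (the identity endomorphism) and purely residual arguments fail. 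Here I would appeal to \ref{p-distinguished-inertia}: the ordinary filtrations yield ramified characters $\delta_F,\delta_G$ on $I_p$ with non-trivial residual restrictions, and the distinction between the fixed weight $k\geq 2$ of $f$ and the variable weight of $G$ means $\delta_F$ and $\delta_G$ encode different $p$-adic weight data; a suitably chosen element $\sigma$ in the decomposition group at $p$ (involving both $I_p$ and $\mathrm{Frob}_p$, using that $\epsilon_F(\mathrm{Frob}_p)=a_p$ and $\epsilon_G(\mathrm{Frob}_p)=b_p$ are units) then separates the four eigenvalues modulo $P$. Once such $\sigma$ has been produced, $\det(\rho_1(\sigma)-1)\in J\setminus P$, completing both the support statement and the torsion conclusion.
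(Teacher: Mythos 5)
Your reduction via Cramer's rule is fine as far as it goes: $\det(\rho_1(\sigma)-1)$ does annihilate $H^0(G_\Sigma,D_{\rho_1})$, so exhibiting one such determinant outside a given height one prime $P\ni p$ would indeed settle non-support at $P$ (and torsionness, since some height one prime contains $p$). The gap is that you never actually produce the element $\sigma$, and the two mechanisms you sketch do not work. In the case $\overline{\rho}_F\not\cong\overline{\rho}_G$, Brauer--Nesbitt plus Chebotarev only give you a Frobenius where the \emph{traces} differ; they do not give disjoint eigenvalue \emph{sets}, and disjointness can genuinely fail at every group element: if $\overline{\rho}_G\cong\overline{\rho}_F\otimes\omega$ for a cubic character $\omega$ and the image is of $\mathrm{SL}_2(\mathbb{F}_3)$-type, one checks that every element of the image shares an eigenvalue between the two representations, so $\det(\rho_1(\sigma)-1)$ lies in the maximal ideal for all $\sigma$ even though there is no invariant vector. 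So your strategy of using only the single-element determinants is strictly stronger than what the lemma asserts and can be impossible to carry out, while the lemma itself remains true. In the case $\overline{\rho}_F\cong\overline{\rho}_G$ the situation is worse: the identity endomorphism is a residual invariant vector, so $\det(\rho_1(\sigma)-1)\in\mathfrak{m}$ for \emph{every} $\sigma$, residual reasoning is useless by design, and your appeal to \ref{p-distinguished-inertia} and ``different $p$-adic weight data'' is not an argument --- no element of the decomposition group is constructed and no non-vanishing modulo $P$ is proved. That case is exactly where the content of the lemma lies.

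The paper's proof avoids the pointwise eigenvalue problem altogether and works module-theoretically at $P$. Let $K$ be the fraction field of $R_{f,G}/P$ (a characteristic $p$ field that is \emph{not} finite). Then $D_{\rho_1}^\vee\otimes_{R_{f,G}}K\cong \Hom_K(V_G,V_f)$, where $\sigma_f$ is (after extension of scalars) the residual representation $\overline{\rho}_F$, hence absolutely irreducible by \ref{residual-irr} and of finite image, so in particular $\det\sigma_f$ takes values in $\overline{\mathbb{F}}_p^\times$. The key point, which your case division misses, is that since $P$ has height one and contains $p$, its pullback to $R_G$ is a height one prime and not the maximal ideal, so the weight variable of $G$ survives: $\det\sigma_G$ takes values in $\overline{\mathbb{F}}_p[[x_G]]^\times$ but not in $\overline{\mathbb{F}}_p^\times$. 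Hence $\sigma_f\not\cong\sigma_G$, and irreducibility of $\sigma_f$ then forces $H^0(G_\Sigma,\Hom_K(V_G,V_f))=0$, i.e.\ non-support at $P$. This determinant-of-the-representation (finite image versus weight-variable image) comparison handles both of your cases uniformly and requires only non-isomorphy over $K$, not the much stronger existence of a single group element with four unit eigenvalue differences. To repair your write-up you would either have to import this mechanism or find annihilator elements beyond the $\det(\rho_1(\sigma)-1)$'s; as it stands the proposal does not prove the lemma.
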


\begin{proof}
Let $\q$ be a height one prime ideal in $R_{f,G}$ containining $p$. Let $K$ denote the fraction field of $\frac{R_{f,G}}{\q}$. Note that the characteristic of $K$ is equal to $p$. We have the following $G_\Sigma$-equivariant isomorphisms:
\begin{align*}
 & D_{\rho_1}^\vee \cong \Hom_{R_{f,G}}\left(L_G \otimes_{R_G} R_{f,G}\ ,\ L_f \otimes_{O_f} R_{f,G} \right),\\
\implies & D_{\rho_1}^\vee \otimes_{R_{f,G}} K  \cong \Hom_{K}\left(V_G,V_f\right).
\end{align*}
Here, $V_f$ and $V_G$ are $K$-vector spaces with a $G_\Sigma$-action given by following representations $\sigma_f$ and $\sigma_G$ respectively:
\begin{align*}
\sigma_f: G_\Sigma \xrightarrow {\rho_f} \Gl_2(O_f) \hookrightarrow \Gl_2(R_{f,G}) \rightarrow \Gl_2(K), \\
\sigma_G: G_\Sigma \xrightarrow {\rho_G} \Gl_2(R_G) \hookrightarrow \Gl_2(R_{f,G}) \rightarrow \Gl_2(K).
\end{align*}
To show that $H^0(G_\Sigma,D_{\rho_1})^\vee$ is not supported at $\p$, it will be sufficient to show that $H^0\left(G_\Sigma,\Hom_K\left(V_G,V_f\right)\right)$ is equal to $0$. Both $\sigma_f$ and $\sigma_G$ are Galois representations over $K$, a field of characteristic $p$. In fact as a $G_\Sigma$-representation, $\sigma_f$ is isomorphic to the residual representation $\overline{\rho}_F$, after extending scalars to $K$. The residual representation $\overline{\rho}_F$ is absolutely irreducible (due to the hypothesis \ref{residual-irr}). Consequently, the Galois representation $\sigma_f$ is irreducible. Thus, to show that $H^0\left(G_\Sigma,\Hom_K\left(V_G,V_f\right)\right)$ is equal to $0$, it will be sufficient to show that $\sigma_f$ and $\sigma_G$ are not isomorphic as $G_\Sigma$-representations over $K$. For this purpose, it will be sufficient to show that $\det(\sigma_f)$ and $\det(\sigma_G)$ are not isomorphic as $G_\Sigma$-representations. The image of $\det(\sigma_f)$ lies inside $\overline{\mathbb{F}}_p^\times$. The pullback $j_G^{-1}(\q)$ of the height one prime ideal $\q$ under the map  $j_G:R_G \hookrightarrow R_{f,G}$ is a height one prime ideal (and not the maximal ideal) of $R_G$. Consequently, the image of $\det(\sigma_G)$ lies inside $\overline{\mathbb{F}}_p[[x_G]]^\times$ but does not lie inside $\overline{\mathbb{F}}_p^\times$. Consequently, $\det(\sigma_f)$ and $\det(\sigma_G)$ are not isomorphic as $G_\Sigma$-representations. The lemma~follows.
\end{proof}

Note that $\frac{R_{F,G}}{\ker(\pi_{2,1})} \hookrightarrow R_{f,G}$ is an integral extension. We have the following isomorphism of $R_{f,G}$-modules:
\begin{align} \label{base-change-2-1}
\left(H^0(G_{\Sigma},D_{\rho_2})^\vee \otimes_{R_{F,G}} \frac{R_{F,G}}{\ker(\pi_{2,1})} \right)\otimes_{\frac{R_{F,G}}{\ker(\pi_{2,1})}} R_{f,G} \cong H^0(G_{\Sigma},D_{\rho_1})^\vee.
\end{align}

Let $Q_{2,1}$ be a height two prime ideal in $R_{F,G}$ containing $\ker(\pi_{2,1})$. It uniquely corresponds to a height one prime ideal $\p$ in $\frac{R_{F,G}}{\ker(\pi_{2,1})}$; And there exist finitely many height one prime ideals $\q_1,\ldots,\q_n$ in $R_{f,G}$ lying above $\p$.  A simple application of Nakayama's lemma along with equation (\ref{base-change-2-1}) lets us make the following observation: A height two prime ideal $Q_{2,1}$ in $R_{F,G}$ containing $\ker(\pi_{2,1})$ belongs to the support of $H^0(G_{\Sigma},D_{\rho_2})^\vee$ if and only if one of the height one prime ideals $\q_i$ (for $1 \leq i \leq n$) in $R_{f,G}$ belongs to the support $H^0(G_\Sigma,D_{\rho_1})^\vee$. These observations along with Lemma \ref{reg-loc-2} and Lemma \ref{no-supp-p} immediately give us the following proposition:

\begin{proposition} \label{p-regular-prop}
Let $Q_{2,1}$ be a height two prime ideal containing $\ker(\pi_{2,1})$.
\begin{itemize}
\item If $p \in Q_{2,1}$, then $\left(H^0(G_{\Sigma},D_{\rho_2})^\vee\right)_{Q_{2,1}}=0$.
\item If $p \notin Q_{2,1}$, then $\left(R_{F,G}\right)_{Q_{2,1}}$ is a regular local ring of dimension $2$.
\end{itemize}
\end{proposition}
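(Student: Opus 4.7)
The plan is to treat the two bullet points separately; both follow by assembling the ingredients collected in the paragraph immediately before the proposition.

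For the first bullet, suppose $p\in Q_{2,1}$. The observation recorded just before the proposition (obtained from Nakayama and the base-change isomorphism (\ref{base-change-2-1})) says that $Q_{2,1}$ lies in the support of $H^0(G_\Sigma,D_{\rho_2})^\vee$ if and only if at least one of the height one primes $\q_1,\dots,\q_n$ of $R_{f,G}$ lying above the image of $Q_{2,1}$ in $R_{F,G}/\ker(\pi_{2,1})$ lies in the support of $H^0(G_\Sigma,D_{\rho_1})^\vee$. Since $R_{F,G}/\ker(\pi_{2,1})\hookrightarrow R_{f,G}$ is an integral extension, each such $\q_i$ necessarily contains $p$. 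By Lemma \ref{no-supp-p}, none of the $\q_i$ can lie in the support of $H^0(G_\Sigma,D_{\rho_1})^\vee$, so $Q_{2,1}$ is not in the support of $H^0(G_\Sigma,D_{\rho_2})^\vee$, which is exactly the vanishing claim $\left(H^0(G_\Sigma,D_{\rho_2})^\vee\right)_{Q_{2,1}}=0$.

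For the second bullet, suppose $p\notin Q_{2,1}$. The plan is to check the hypotheses of Lemma \ref{reg-loc-2}, which then gives the regularity directly. Recall that $\pi_{2,1}:R_{F,G}\to R_{f,G}$ is induced by $j_F:R_F\to O_f\hookrightarrow R_{f,G}$ and $j_G:R_G\hookrightarrow R_{f,G}$, where $O_f$ is the integral closure of $R_F/P_k$ for the classical height one prime $P_k\subset R_F$ of weight $k\geq 2$ specializing $F$ to $f$. Consequently $i_F(P_k)\subset \ker(\pi_{2,1})$, and since $\ker(\pi_{2,1})\subseteq Q_{2,1}$ we get $i_F(P_k)\subset Q_{2,1}$. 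Combined with the standing assumption $p\notin Q_{2,1}$, Lemma \ref{reg-loc-2} applies verbatim and yields that $(R_{F,G})_{Q_{2,1}}$ is a regular local ring of Krull dimension two.

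Neither step is really an obstacle: the genuine work has been done in Lemmas \ref{reg-loc-2} and \ref{no-supp-p}, and in the support-equivalence observation derived from (\ref{base-change-2-1}). The proof of the proposition is then just a bookkeeping exercise that routes those inputs through the dichotomy $p\in Q_{2,1}$ versus $p\notin Q_{2,1}$; the mildest subtlety is simply to remember that $\ker(\pi_{2,1})$ contains $i_F(P_k)$ so that the second hypothesis of Lemma \ref{reg-loc-2} is automatic.
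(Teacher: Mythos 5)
Your proposal is correct and follows the paper's own argument: the paper likewise deduces the first bullet from the support-equivalence observation derived from equation (\ref{base-change-2-1}) together with Lemma \ref{no-supp-p}, and the second bullet by noting $i_F(P_k)\subset\ker(\pi_{2,1})\subseteq Q_{2,1}$ so that Lemma \ref{reg-loc-2} applies. (The only cosmetic remark: each $\q_i$ contains $p$ simply because it lies above $\p$, which contains the image of $p$; integrality of $\frac{R_{F,G}}{\ker(\pi_{2,1})}\hookrightarrow R_{f,G}$ is what guarantees such primes exist, not why they contain $p$.)
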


Let $P_k$ be the classical height one prime ideal corresponding to $f$. Here, $P_k$ is defined to be the kernel of the natural map $\pi_f : R_F \rightarrow O_f$. The ring $O_f$, defined to be the integral closure of $\frac{R_F}{P_k}$, is the ring of integers in a finite extension of $\Q_p$. As a result, the index $\left[O_f:\frac{R_F}{\ker(\pi_f)}\right]$ is finite and equal to a power of $p$. The ring $R_{f,G}$ is the completed tensor product $O_f \hotimes R_G$ (over the ring $O$). The map $\pi_{2,1} : R_{F,G} \rightarrow R_{f,G}$ is obtained naturally via the maps
$$ \pi_f: R_F \rightarrow O_f, \qquad \text{id}: R_G \xrightarrow {=} R_G.$$ So, the cokernel of the natural inclusion  $\frac{R_{F,G}}{\ker(\pi_{2,1})} \hookrightarrow R_{f,G}$ is also annihilated by a power of $p$. We obtain the following lemma:

\begin{lemma} \label{p-invertible}
Let $\q$ be a height one prime ideal in $R_{f,G}$ not containing $p$. Let $Q_{2,1}$ denote the height two prime ideal in $R_{F,G}$, containing $\ker(\pi_{2,1})$, given by $\pi_{2,1}^{-1}(\q)$. Then, the following natural inclusion is an equality:
\begin{align*}
\left(\frac{\R_{F,G}}{\ker(\pi_{2,1})}\right)_{Q_{2,1}} \stackrel{=}{\hookrightarrow} \left(R_{f,G}\right)_\q.
\end{align*}
\end{lemma}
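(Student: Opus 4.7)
The plan is to exploit the discussion immediately preceding the lemma statement, which observes that the cokernel $C$ of the natural inclusion $\iota : \frac{R_{F,G}}{\ker(\pi_{2,1})} \hookrightarrow R_{f,G}$ is annihilated by some power $p^N$ of $p$ (this follows from the fact that $\pi_{2,1}$ is induced by $\pi_f : R_F \to O_f$ and the identity on $R_G$, together with $[O_f : R_F/\ker(\pi_f)]$ being a power of $p$). Since $\pi_{2,1}(p) = p \notin \q$, we also have $p \notin Q_{2,1}$.

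First I would write down the short exact sequence of $\frac{R_{F,G}}{\ker(\pi_{2,1})}$-modules
\begin{align*}
0 \to \frac{R_{F,G}}{\ker(\pi_{2,1})} \xrightarrow{\iota} R_{f,G} \to C \to 0, \qquad p^N \cdot C = 0,
\end{align*}
and localize at the multiplicative set $S := \frac{R_{F,G}}{\ker(\pi_{2,1})} \setminus \overline{Q}_{2,1}$, where $\overline{Q}_{2,1}$ denotes the image of $Q_{2,1}$ in $\frac{R_{F,G}}{\ker(\pi_{2,1})}$. Because $p \in S$, we have $S^{-1}C = 0$, so localizing yields a ring isomorphism
\begin{align*}
\left(\frac{R_{F,G}}{\ker(\pi_{2,1})}\right)_{Q_{2,1}} \xrightarrow{\ \sim \ } S^{-1} R_{f,G}.
\end{align*}

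Next I would identify $S^{-1} R_{f,G}$ with $(R_{f,G})_{\q}$. Since $\overline{Q}_{2,1} = \q \cap \frac{R_{F,G}}{\ker(\pi_{2,1})}$ (using $Q_{2,1} = \pi_{2,1}^{-1}(\q)$), the prime $\q$ of $R_{f,G}$ does not meet $S$, so the universal property of localization gives a natural map $S^{-1}R_{f,G} \to (R_{f,G})_{\q}$; this factorizes the map in the lemma. On the other hand, $S^{-1}R_{f,G}$ is a local ring (being isomorphic to the local ring on the left above), with maximal ideal generated by the image of $\overline{Q}_{2,1}$. Prime ideals of $S^{-1}R_{f,G}$ correspond bijectively to primes of $R_{f,G}$ disjoint from $S$; in particular any prime of $R_{f,G}$ contracting to $\overline{Q}_{2,1}$ extends to a prime of $S^{-1}R_{f,G}$ contained in its unique maximal ideal, forcing there to be exactly one such prime and it must equal $\q$. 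Consequently every element of $R_{f,G} \setminus \q$ is already a unit in $S^{-1}R_{f,G}$, so $S^{-1}R_{f,G} = (R_{f,G})_{\q}$. Composing the two displayed isomorphisms yields the asserted equality.

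No serious obstacle is expected; the argument is essentially a one-step localization of the short exact sequence whose cokernel is $p$-power torsion, together with the book-keeping that the resulting local ring of residue class $\overline{Q}_{2,1}$ must coincide with the localization at the unique prime $\q$ of $R_{f,G}$ lying above it.
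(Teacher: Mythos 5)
Your proposal is correct and follows exactly the route the paper intends: the lemma is stated there as an immediate consequence of the preceding observation that the cokernel of $\frac{R_{F,G}}{\ker(\pi_{2,1})} \hookrightarrow R_{f,G}$ is annihilated by a power of $p$, which becomes invisible after localizing at a prime not containing $p$. Your write-up merely supplies the localization bookkeeping (killing the $p$-power-torsion cokernel and identifying $S^{-1}R_{f,G}$ with $(R_{f,G})_{\q}$) that the paper leaves implicit.
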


Lemma \ref{no-supp-p} also lets us conclude that $H^0(G_\Sigma,D_{\rho_1})^\vee$ is a torsion $R_{f,G}$-module. Combining equation (\ref{base-change-2-1}) with Nakayama's lemma,  we can then conclude that $\left(H^0(G_{\Sigma},D_{\rho_2})^\vee\right)_{\ker(\pi_{2,1})} = 0$. \\

What we also obtain is that $H^0(G_{\Sigma},D_{\rho_2})^\vee$ is a torsion $R_{F,G}$-module. Considering $R_{F,G}$ as an $R_{F,G}[[\Gamma]]$-module via the map $\pi_{3,2}:R_{F,G}[[\Gamma]] \rightarrow R_{F,G}$, we also have the following isomorphism of $R_{F,G}$-modules:
\begin{align} \label{base-change-3-2}
H^0(G_{\Sigma},D_{\rho_3})^\vee \otimes_{R_{F,G}[[\Gamma]]} R_{F,G} \cong H^0(G_{\Sigma},D_{\rho_2})^\vee.
\end{align}
Nakayama's lemma lets us conclude that $\left(H^0(G_{\Sigma},D_{\rho_3})^\vee\right)_{\ker(\pi_{3,2})} = 0$. We have shown that the following equality holds:
\begin{align*}
\left(H^0(G_{\Sigma},D_{\rho_3})^\vee\right)_{\ker(\pi_{3,2})} = \left(H^0(G_{\Sigma},D_{\rho_2})^\vee\right)_{\ker(\pi_{2,1})} = 0.
\end{align*}
Since the Tor functor commutes with localization,  we immediately get the following lemma:
\begin{lemma}\label{global-tor-torsion}
For all $i \geq 0$, the following statements hold:
\begin{itemize}
\item The $R_{F,G}$-module $\Tor_i^{R_{F,G}[[\Gamma]]}\left( R_{F,G}, \ H^0(G_\Sigma,D_{\rho_3})^\vee \right)$ is torsion,
\item The $R_{f,G}$-module $\Tor_i^{R_{F,G}}\left( R_{f,G}, \ H^0(G_\Sigma,D_{\rho_2})^\vee \right)$ is torsion.
\end{itemize}
\end{lemma}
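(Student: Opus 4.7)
The plan is to use the fact the author has already set up: just above the statement, it has been shown that
\[
\bigl(H^0(G_{\Sigma},D_{\rho_3})^\vee\bigr)_{\ker(\pi_{3,2})} = 0, \qquad \bigl(H^0(G_{\Sigma},D_{\rho_2})^\vee\bigr)_{\ker(\pi_{2,1})} = 0,
\]
and to combine this with the compatibility of Tor with localization. For a finitely generated module over a Noetherian domain, being torsion is the same as vanishing after localizing at the generic point, so I just need to check that the two Tor modules vanish after inverting, respectively, the nonzero elements of $R_{F,G}$ and of $R_{f,G}$.

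For the first bullet, I would observe that since $\pi_{3,2} : R_{F,G}[[\Gamma]] \twoheadrightarrow R_{F,G}$ is surjective, the multiplicative set $S = R_{F,G}[[\Gamma]] \setminus \ker(\pi_{3,2})$ consists precisely of the elements sent by $\pi_{3,2}$ to nonzero elements of $R_{F,G}$. Hence inverting $S$ in $R_{F,G}[[\Gamma]]$ agrees with inverting the nonzero elements of $R_{F,G}$ through the map $\pi_{3,2}$. Using that localization is exact and commutes with Tor, I get
\[
S^{-1}\,\Tor_i^{R_{F,G}[[\Gamma]]}\bigl(R_{F,G}, H^0(G_{\Sigma},D_{\rho_3})^\vee\bigr) \cong \Tor_i^{R_{F,G}[[\Gamma]]_{\ker(\pi_{3,2})}}\!\bigl((R_{F,G})_{(0)},\,(H^0(G_{\Sigma},D_{\rho_3})^\vee)_{\ker(\pi_{3,2})}\bigr),
\]
and the right-hand side vanishes because the second argument is zero by the established computation. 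This shows the $R_{F,G}$-module $\Tor_i^{R_{F,G}[[\Gamma]]}(R_{F,G}, H^0(G_\Sigma,D_{\rho_3})^\vee)$ is killed after inverting all nonzero elements of $R_{F,G}$, and is therefore torsion.

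The second bullet is entirely parallel. The map $\pi_{2,1} : R_{F,G} \to R_{f,G}$ is not surjective, but the cokernel is annihilated by a power of $p$ (as noted in the discussion preceding Lemma \ref{p-invertible}), so pulling back nonzero elements of $R_{f,G}$ through $\pi_{2,1}$ and combining with a power of $p$ produces a multiplicative subset whose inversion again amounts to localizing at $\ker(\pi_{2,1})$ in $R_{F,G}$. More directly, I would simply tensor $\Tor_i^{R_{F,G}}(R_{f,G},H^0(G_{\Sigma},D_{\rho_2})^\vee)$ over $R_{f,G}$ with $\mathrm{Frac}(R_{f,G})$, use flatness to pull the fraction field inside the Tor, and then use that $\mathrm{Frac}(R_{f,G})$ viewed as an $R_{F,G}$-module is a module over $R_{F,G}_{\ker(\pi_{2,1})}$. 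Commutativity of Tor with the localization $R_{F,G} \to R_{F,G}_{\ker(\pi_{2,1})}$ then gives the vanishing from $(H^0(G_{\Sigma},D_{\rho_2})^\vee)_{\ker(\pi_{2,1})} = 0$.

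There is no real obstacle here once the two vanishing statements at $\ker(\pi_{3,2})$ and $\ker(\pi_{2,1})$ are in hand; the only small care needed is to correctly identify the multiplicative set being inverted on each side so that commutation of Tor with localization applies. For $i \geq 2$ in the first bullet one does not even need this argument, since the resolution $0 \to R_{F,G}[[\Gamma]] \xrightarrow{\gamma_0-1} R_{F,G}[[\Gamma]] \to R_{F,G} \to 0$ has length one and so all higher Tor's are automatically zero; the case $i = 0,1$ is what the localization argument is really used for.
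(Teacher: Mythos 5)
Your proposal is correct and follows essentially the same route as the paper: the paper's proof also rests on the vanishing $\left(H^0(G_{\Sigma},D_{\rho_3})^\vee\right)_{\ker(\pi_{3,2})} = \left(H^0(G_{\Sigma},D_{\rho_2})^\vee\right)_{\ker(\pi_{2,1})} = 0$ established just before the lemma, combined with the fact that Tor commutes with localization. You have merely spelled out the localization bookkeeping (identifying the inverted multiplicative sets and passing to $\mathrm{Frac}(R_{F,G})$ and $\mathrm{Frac}(R_{f,G})$) that the paper leaves implicit.
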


\subsection*{Calculations involving local Galois cohomology groups}

As for the local Galois cohomology groups, we will begin with the following exact sequence of $R_{F,G}[[\Gamma]]$-modules that is $\Gal{\overline{\Q}_p}{\Q_p}$-equivariant:
\begin{align}
0 & \rightarrow \underbrace{W}_{\text{corank}=1}  \rightarrow \underbrace{\frac{D_{\rho_3}}{\Fil^+D_{\rho_3}}}_{\text{corank}=2} \rightarrow  \underbrace{W'}_{\text{corank}=1} \rightarrow 0, \text{ where}
\end{align}

\begin{align*}
W := \Hom_{R_{F,G}}\left(\frac{L_{F}}{\Fil^+L_F} \otimes_{R_F} R_{F,G},\ \frac{L_G}{\Fil^+L_G} \otimes_{R_G} R_{F,G}\right) \otimes_{R_{F,G}} \hat{R_{F,G}[[\Gamma]]} (\kappa^{-1}), \\  W':= \Hom_{R_{F,G}}\left(\Fil^+L_F \otimes_{R_F} R_{F,G},\ \frac{L_G}{\Fil^+L_G} \otimes_{R_G} R_{F,G}\right) \otimes_{R_{F,G}} \hat{R_{F,G}[[\Gamma]]}(\kappa^{-1}).
\end{align*}
Note that we also have the following $\Gal{\overline{\Q}_p}{\Q_p}$-equivariant isomorphisms of $R_{F,G}[[\Gamma]]$-modules:
\begin{align*}
&W \cong R_{F,G}\bigg((i_F \circ \epsilon_F^{-1}) \cdot (i_G \circ \epsilon_G)\bigg) \otimes_{R_{F,G}} \hat{R_{F,G}[[\Gamma]]}(\kappa^{-1}), \\& W' \cong R_{F,G}\bigg((i_F \circ \delta_F^{-1}) \cdot (i_G \circ \epsilon_G)\bigg) \otimes_{R_{F,G}} \hat{R_{F,G}[[\Gamma]]}(\kappa^{-1}).
\end{align*}
The hypotheses \ref{p-distinguished-inertia} ensures us that the residual representation, defined over $\overline{\mathbb{F}}_p$, associated to $(i_F \circ \delta_F^{-1}) \cdot (i_G \circ \epsilon_G)$ is non-trivial. Consequently, $H^0(I_p,W')=0$. So, we get the following $\frac{\Gal{\overline{\Q}_p}{\Q_p}}{I_p}$-equivariant isomorphism of $R_{F,G}[[\Gamma]]$-modules:
\begin{align}
H^0\left(I_p,\frac{D_{\rho_3}}{\Fil^+D_{\rho_3}}\right) \cong H^0(I_p,W).
\end{align}
Let $\eta_p$ denote the unique prime in $\Q_\infty$ lying above $p$ and let $I_{\eta_p}$ denote the corresponding inertia subgroup at $\eta_p$. The Galois representation $\rho_3$ is related to the cyclotomic deformation of $\rho_2$. We have denoted the quotient $\frac{\Gal{\overline{\Q}_p}{\Q_p}}{I_p}$ by $\Gamma_p$. The arguments in Section 3 of \cite{greenberg1994iwasawa} combined with the fact that both the characters $\epsilon_F$ and $\epsilon_G$ are unramified at $p$ give us the following isomorphism  that is $\Gamma_p$-equivariant:
\begin{align*}
H^0(I_p,W) \cong H^0\left(I_{\eta_p},\hat{R_{F,G}}\bigg((i_F \circ \epsilon_F^{-1}) \cdot (i_G \circ \epsilon_G)\bigg)\right) \cong \hat{R_{F,G}} \bigg((i_F \circ \epsilon_F^{-1}) \cdot (i_G \circ \epsilon_G)\bigg)
\end{align*}
Taking the Pontryagin duals of the modules in the above equation gives us the following lemma:
\begin{lemma} \label{3-inertia-free-iso} We have the following isomorphism of $R_{F,G}[[\Gamma]]$-modules  that is $\Gamma_p$-equivariant.
$$H^0\left(I_p,\frac{D_{\rho_3}}{\Fil^+D_{\rho_3}}\right)^\vee = R_{F,G} \bigg((i_F \circ \epsilon_F) \cdot (i_G \circ \epsilon_G^{-1})\bigg).$$
Here, we consider $R_{F,G}$ as an $R_{F,G}[[\Gamma]]$-module via the map $\pi_{3,2} : R_{F,G}[[\Gamma]] \rightarrow R_{F,G}$.
\end{lemma}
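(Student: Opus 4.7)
The plan is to follow the identifications already set up in the paragraphs preceding the lemma statement, and to reduce the computation to the cyclotomic character case using Greenberg's description of the cyclotomic deformation. First I would start from the $\Gal{\overline{\Q}_p}{\Q_p}$-equivariant short exact sequence
\begin{align*}
0 \to W \to \frac{D_{\rho_3}}{\Fil^+ D_{\rho_3}} \to W' \to 0
\end{align*}
already isolated in the text, and use the hypothesis \ref{p-distinguished-inertia} to kill $H^0(I_p, W')$. Concretely, since $\epsilon_G$ is unramified at $p$ and $\kappa^{-1}$ factors through $\Gamma$ (so is unramified at $p$ as well), the action of $I_p$ on $W'$ is via the character $i_F \circ \delta_F^{-1}$, whose residual character on $I_p$ is $\overline{\delta}_F^{-1}\mid_{I_p}$. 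This is non-trivial by \ref{p-distinguished-inertia}, so $H^0(I_p, W') = 0$ and the inclusion $W \hookrightarrow D_{\rho_3}/\Fil^+ D_{\rho_3}$ induces an isomorphism $H^0(I_p, W) \cong H^0(I_p, D_{\rho_3}/\Fil^+ D_{\rho_3})$.

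Next I would compute $H^0(I_p, W)$. Using the description
\begin{align*}
W \cong R_{F,G}\bigl((i_F \circ \epsilon_F^{-1}) \cdot (i_G \circ \epsilon_G)\bigr) \otimes_{R_{F,G}} \hat{R_{F,G}[[\Gamma]]}(\kappa^{-1}),
\end{align*}
and the fact that both $\epsilon_F$ and $\epsilon_G$ are unramified at $p$, the inertia group $I_p$ acts on $W$ only through the cyclotomic twist $\kappa^{-1}\mid_{I_p}$. The arguments of Section 3 of \cite{greenberg1994iwasawa}, recalled earlier in the excerpt in the context of the cyclotomic deformation of $\rho_2$, identify $I_p$-invariants in a cyclotomically deformed module with $I_{\eta_p}$-invariants of the underlying module over $\Q_\infty$. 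Applying this bookkeeping yields
\begin{align*}
H^0(I_p, W) \cong H^0\left(I_{\eta_p},\, \hat{R_{F,G}}\bigl((i_F \circ \epsilon_F^{-1}) \cdot (i_G \circ \epsilon_G)\bigr)\right) \cong \hat{R_{F,G}}\bigl((i_F \circ \epsilon_F^{-1}) \cdot (i_G \circ \epsilon_G)\bigr),
\end{align*}
where the second isomorphism uses again that $\epsilon_F, \epsilon_G$ are unramified, so $I_{\eta_p}$ acts trivially. The residual quotient $\Gamma_p = \Gal{\overline{\Q}_p}{\Q_p}/I_p$ then acts by the unramified character $(i_F \circ \epsilon_F^{-1}) \cdot (i_G \circ \epsilon_G)$.

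Finally I would take Pontryagin duals. The dual of $\hat{R_{F,G}}$ as a topological $R_{F,G}$-module is $R_{F,G}$ itself, and dualizing inverts the Galois character, producing the $\Gamma_p$-equivariant isomorphism of $R_{F,G}[[\Gamma]]$-modules
\begin{align*}
H^0\!\left(I_p, \frac{D_{\rho_3}}{\Fil^+ D_{\rho_3}}\right)^{\!\vee} \cong R_{F,G}\bigl((i_F \circ \epsilon_F) \cdot (i_G \circ \epsilon_G^{-1})\bigr),
\end{align*}
where the $R_{F,G}[[\Gamma]]$-structure on the right is obtained via $\pi_{3,2}$, consistent with the fact that $\Gamma$ acted on $W$ only through the $\kappa^{-1}$ twist which is killed on $I_p$-invariants after dualizing. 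The main technical point to be careful with is the second paragraph, namely the identification $H^0(I_p, \hat{R_{F,G}[[\Gamma]]}(\kappa^{-1})) \cong \hat{R_{F,G}}$ and the tracking of the residual $\Gamma_p$-action; this is essentially the standard cyclotomic deformation calculation, but one has to verify that the $\Gamma$-action really does become trivial on invariants, so that the final answer carries the $R_{F,G}[[\Gamma]]$-structure declared in the lemma via $\pi_{3,2}$.
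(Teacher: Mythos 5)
Your route is the same as the paper's: the short exact sequence $0 \to W \to D_{\rho_3}/\Fil^+D_{\rho_3} \to W' \to 0$, killing $H^0(I_p,W')$ via \ref{p-distinguished-inertia}, identifying $H^0(I_p,W)$ with $\hat{R_{F,G}}\bigl((i_F\circ\epsilon_F^{-1})(i_G\circ\epsilon_G)\bigr)$ through the Section 3 arguments of \cite{greenberg1994iwasawa} together with the unramifiedness of $\epsilon_F,\epsilon_G$, and then dualizing. So the overall structure is fine.

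One assertion in your first paragraph is wrong, though, and you should repair it: $\kappa^{-1}$ is \emph{not} unramified at $p$. The character $\kappa$ factors through $\Gamma=\Gal{\Q_\infty}{\Q}$, but $p$ is (totally) ramified in the cyclotomic $\Z_p$-extension, so $\kappa\mid_{I_p}$ is nontrivial --- indeed this is precisely why, in your own second paragraph, $I_p$ acts nontrivially on $W$ through $\kappa^{-1}$ and the Greenberg-style cyclotomic deformation computation is needed to cut the invariants down from an $R_{F,G}[[\Gamma]]$-cofree module to an $R_{F,G}$-cofree one; if $\kappa^{-1}$ really were unramified, $H^0(I_p,W)$ would be all of $W$ and the lemma would be false. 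The correct justification for the vanishing of $H^0(I_p,W')$ is the one the paper uses: $\kappa$ takes values in $\Gamma\subset 1+\mathfrak{m}$, so the \emph{residual} character of $\kappa^{-1}$ is trivial, and hence the residual $I_p$-action on $W'$ is through $\overline{\delta}_F^{-1}\mid_{I_p}\cdot\overline{\epsilon}_G\mid_{I_p}=\overline{\delta}_F^{-1}\mid_{I_p}\neq \mathbf{1}$ by \ref{p-distinguished-inertia}; choosing $g\in I_p$ with residual value $\neq 1$ makes $\chi'(g)-1$ a unit in $R_{F,G}[[\Gamma]]$ (where $\chi'$ is the character through which $\Gal{\overline{\Q}_p}{\Q_p}$ acts on $W'$), and a unit annihilates no nonzero element of the cofree module $W'$. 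Your stated residual character is the right one, so the conclusion stands; only the reason offered ("unramified") needs replacing, and the same remark resolves the caveat you raise at the end about the $\Gamma$-action on $H^0(I_p,W)$: the invariants are exactly the $\kappa^{-1}(I_p)$-torsion, i.e.\ the $(\gamma_0-1)$-torsion, which is why the answer is an $R_{F,G}[[\Gamma]]$-module through $\pi_{3,2}$.
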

Note that $a_p$ and $b_p$ are elements of $R_F$ and $R_G$ respectively but not elements of $O$. This is because the values that $a_p$ and $b_p$ take at classical specializations of $F$ and $G$ respectively vary as one varies the weight (see Lemma 3.2 in \cite{hida1985ap}). Consequently, the ratio $\frac{i_F(a_p)}{i_G(b_p)}$ is a not an element of $O$ (in particular, it is not equal to $1$). Thus, the following map of free $R_{F,G}$-modules (of rank $1$), which is given by multiplication by $\frac{i_F(a_p)}{i_G(b_p)}-1$, is  injective:
\begin{align*}
H^0\left(I_p,\frac{D_{\rho_3}}{\Fil^+D_{\rho_3}}\right)^\vee \xrightarrow {\Frob_p-1} H^0\left(I_p,\frac{D_{\rho_3}}{\Fil^+D_{\rho_3}}\right)^\vee.
\end{align*}
The Frobenius at $p$, denoted by $\Frob_p$, is a topological generated for $\Gamma_p$. This gives us the following equality:
$$H^1\left(\Gamma_p,H^0\left(I_p,\frac{D_{\rho_3}}{\Fil^+D_{\rho_3}}\right)\right)=0.$$

Arguing similarly, we get the following results for the modules related to $\rho_2$ and $\rho_1$:
 \begin{align} \label{2-1-module-structures}
 H^0\left(I_p,\frac{D_{\rho_2}}{\Fil^+D_{\rho_2}}\right) &\cong \hat{R_{F,G}}\left((i_F \circ \epsilon_F^{-1})\cdot(i_G \circ \epsilon_G)\right),  \\ \notag H^1\left(\Gamma_p,\ H^0\left(I_p,\frac{D_{\rho_2}}{\Fil^+D_{\rho_2}}\right)\right)&=0. \\ \notag
 H^0\left(I_p,\frac{D_{\rho_1}}{\Fil^+D_{\rho_1}}\right) &\cong \hat{R_{f,G}}\left((j_F \circ \epsilon_F^{-1})\cdot(i_G \circ \epsilon_G)\right),  \\ \notag H^1\left(\Gamma_p,\ H^0\left(I_p,\frac{D_{\rho_1}}{\Fil^+D_{\rho_1}}\right)\right)&=0.
\end{align}

Combining these observations, we get the following lemma:
\begin{lemma} \label{strict-selmer-difference} Let $i \in \{1,2,3\}$. Then,
$H^1\left(\Gamma_p,\ H^0\left(I_p,\frac{D_{\rho_i}}{\text{Fil}^+D_{\rho_i}}\right)\right)=0$.
\end{lemma}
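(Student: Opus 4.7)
The plan is to treat the three cases $i \in \{1,2,3\}$ uniformly, using the explicit descriptions of $H^0(I_p, D_{\rho_i}/\Fil^+ D_{\rho_i})^\vee$ already obtained in Lemma~\ref{3-inertia-free-iso} and in equation~(\ref{2-1-module-structures}). In each case this Pontryagin dual is a free rank one module over $R_{F,G}$ (when $i = 2, 3$) or $R_{f,G}$ (when $i = 1$), on which $\Gamma_p$ acts through an unramified character whose value at the topological generator $\Frob_p$ equals $i_F(a_p)\cdot i_G(b_p)^{-1}$ for $i = 2, 3$ and $j_F(a_p) \cdot j_G(b_p)^{-1}$ for $i = 1$. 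For $i = 3$ the apparent twist by $\kappa^{-1}$ is absorbed into the $R_{F,G}[[\Gamma]]$-module structure via $\pi_{3,2}$, as recorded in Lemma~\ref{3-inertia-free-iso}.

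Next I would reduce the cohomological question to a purely algebraic one. Since $\Gamma_p$ is topologically pro-cyclic with topological generator $\Frob_p$, for any discrete $\Gamma_p$-module $M$ the continuous cohomology group $H^1(\Gamma_p, M)$ is the cokernel of $\Frob_p - 1$ acting on $M$. Dualizing, this cokernel vanishes precisely when $\Frob_p - 1$ acts injectively on $M^\vee$. The lemma therefore reduces, for each $i$, to showing that multiplication by $\frac{i_F(a_p)}{i_G(b_p)} - 1$ (respectively $\frac{j_F(a_p)}{j_G(b_p)} - 1$) is injective on a free rank one module over the integral domain $R_{F,G}$ (respectively $R_{f,G}$), which in turn amounts to showing that each of these scalars is non-zero.

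The case $i = 3$ is in essence already carried out in the paragraph preceding the lemma: by Lemma 3.2 of \cite{hida1985ap}, the values of $a_p$ along classical specializations of $F$ vary with the weight, so $a_p \notin O$; analogously $b_p \notin O$; and therefore $i_F(a_p) \ne i_G(b_p)$ in the completed tensor product $R_{F,G}$. The same computation covers $i = 2$, since that case uses the same characters and the same base ring. For $i = 1$, the element $j_F(a_p)$ lies in the ``horizontal'' subring $O_f \subset R_{f,G}$, while $j_G(b_p)$ sits in the image of $R_G$ and varies non-trivially in the weight variable $x_G$; hence $j_G(b_p) \notin O_f$, and in particular $j_F(a_p) \ne j_G(b_p)$. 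The one genuine input beyond formal manipulation is this weight-variation property of $a_p$ and $b_p$; once that is invoked, the three cases of the lemma follow uniformly and mechanically.
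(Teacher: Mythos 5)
Your proposal is correct and follows essentially the same route as the paper: both identify $H^0\left(I_p,\frac{D_{\rho_i}}{\Fil^+D_{\rho_i}}\right)^\vee$ (via Lemma \ref{3-inertia-free-iso} and equation (\ref{2-1-module-structures})) as a free rank-one module on which $\Frob_p$ acts by the ratio of the $U_p$-eigenvalues, and then deduce the vanishing of $H^1(\Gamma_p,-)$ from the injectivity of $\Frob_p-1$ on the dual, which rests on the weight-variation fact ($a_p, b_p \notin O$, Lemma 3.2 of Hida) guaranteeing this ratio is not $1$.
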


Consider the short exact sequence $0 \rightarrow R_{F,G}[[\Gamma]] \xrightarrow {\gamma_0-1} R_{F,G}[[\Gamma]] \rightarrow R_{F,G} \rightarrow 0$ of $R_{F,G}[[\Gamma]]$-modules. Here, $\gamma_0$ is a topological generator for the pro-cyclic group $\Gamma$. Taking the tensor product with the $R_{F,G}[[\Gamma]]$-module $H^0\left(I_p,\frac{D_{\rho_3}}{\Fil^+D_{\rho_3}}\right)^\vee$ over the ring $R_{F,G}[[\Gamma]]$, we get the following exact sequence:

\begin{align*}
0 \rightarrow& \Tor_1^{R_{F,G}[[\Gamma]]} \left( R_{F,G}, \ H^0\left(I_p,\frac{D_{\rho_3}}{\Fil^+D_{\rho_3}}\right)^\vee \right) \rightarrow \\ &\rightarrow H^0\left(I_p,\frac{D_{\rho_3}}{\Fil^+D_{\rho_3}}\right)^\vee  \xrightarrow {\gamma_0-1} H^0\left(I_p,\frac{D_{\rho_3}}{\Fil^+D_{\rho_3}}\right)^\vee \rightarrow \\ & \rightarrow H^0\left(I_p,\frac{D_{\rho_3}}{\Fil^+D_{\rho_3}}\right)^\vee \otimes_{R_{F,G}[[\Gamma]]} R_{F,G}  \rightarrow 0.
\end{align*}

By Nakayama's lemma, the natural surjection $$H^0\left(I_p,\frac{D_{\rho_3}}{\Fil^+D_{\rho_3}}\right)^\vee  \rightarrow H^0\left(I_p,\frac{D_{\rho_3}}{\Fil^+D_{\rho_3}}\right)^\vee \otimes_{R_{F,G}[[\Gamma]]} R_{F,G}$$ must be an isomorphism since it is a map between free $R_{F,G}$-modules of rank $1$ (see Lemma \ref{3-inertia-free-iso} and equation (\ref{2-1-module-structures})). This tells us that the map $H^0\left(I_p,\frac{D_{\rho_3}}{\Fil^+D_{\rho_3}}\right)^\vee  \xrightarrow {\gamma_0-1} H^0\left(I_p,\frac{D_{\rho_3}}{\Fil^+D_{\rho_3}}\right)^\vee$, given by multiplication by the element $\gamma_0-1$, is the zero map. Using Lemma \ref{3-inertia-free-iso}, we get the following isomorphism of $\R_{F,G}$-modules that is $\Gamma_p$-equivariant:
 \begin{align*}
\Tor_1^{R_{F,G}[[\Gamma]]} \left( R_{F,G}, \ H^0\left(I_p,\frac{D_{\rho_3}}{\Fil^+D_{\rho_3}}\right)^\vee \right) & \cong H^0\left(I_p,\frac{D_{\rho_3}}{\Fil^+D_{\rho_3}}\right)^\vee  \\ &\cong R_{F,G} \bigg( (i_F \circ \epsilon_F)\cdot (i_G \circ \epsilon_G^{-1})\bigg).
\end{align*}
This observation, along with equation (\ref{2-1-module-structures}), immediately gives us the following lemma:
\begin{lemma} \label{loc-tor-3-2}
We have the following isomorphism of torsion $R_{F,G}$-modules:
\begin{align*}
    \Tor_1^{R_{F,G}[[\Gamma]]} \left( R_{F,G}, \ H^0\left(I_p,\frac{D_{\rho_3}}{\Fil^+D_{\rho_3}}\right)^\vee \right)_{\Gamma_p} & \cong \frac{R_{F,G}}{\left(1-\frac{i_F(a_p)}{i_G(b_p)}\right)}, \\ H^0\left(\Gal{\overline{\Q}_p}{\Q_p},\frac{D_{\rho_2}}{\Fil^+D_{\rho_2}}\right)^\vee & \cong \frac{R_{F,G}}{\left(1-\frac{i_F(a_p)}{i_G(b_p)}\right)}.
\end{align*}
\end{lemma}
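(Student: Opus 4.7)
The plan is to assemble the lemma from the explicit descriptions of the relevant local Galois cohomology groups that are already in place, and to finish with a single computation of the action of $\Frob_p - 1$.

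For the first isomorphism, the paragraph immediately preceding the statement (applying the free resolution $0 \to R_{F,G}[[\Gamma]] \xrightarrow{\gamma_0-1} R_{F,G}[[\Gamma]] \to R_{F,G} \to 0$ to the $R_{F,G}[[\Gamma]]$-module $H^0(I_p, D_{\rho_3}/\Fil^+D_{\rho_3})^\vee$, combined with Lemma \ref{3-inertia-free-iso}) already yields the $\Gamma_p$-equivariant isomorphism of $R_{F,G}$-modules
$$
\Tor_1^{R_{F,G}[[\Gamma]]}\!\left(R_{F,G},\; H^0(I_p, D_{\rho_3}/\Fil^+D_{\rho_3})^\vee\right) \;\cong\; R_{F,G}\bigl((i_F\circ\epsilon_F)\,(i_G\circ\epsilon_G^{-1})\bigr).
$$
It then suffices to take $\Gamma_p$-coinvariants of the right hand side. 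Since $\Gamma_p$ is topologically generated by $\Frob_p$ with $\epsilon_F(\Frob_p)=a_p$ and $\epsilon_G(\Frob_p)=b_p$, the element $\Frob_p$ acts by multiplication by $i_F(a_p)/i_G(b_p) \in R_{F,G}^{\times}$; coinvariants therefore give $R_{F,G}/\bigl(1 - i_F(a_p)/i_G(b_p)\bigr)$.

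For the second isomorphism, I begin from the first line of (\ref{2-1-module-structures}), namely $H^0(I_p, D_{\rho_2}/\Fil^+D_{\rho_2}) \cong \hat{R_{F,G}}\bigl((i_F\circ\epsilon_F^{-1})(i_G\circ\epsilon_G)\bigr)$, and apply Pontryagin duality, which converts a free rank-one twist by a character $\chi$ into a free rank-one twist by $\chi^{-1}$ and interchanges $\Gamma_p$-invariants with $\Gamma_p$-coinvariants. This gives $H^0(I_p, D_{\rho_2}/\Fil^+D_{\rho_2})^\vee \cong R_{F,G}\bigl((i_F\circ\epsilon_F)(i_G\circ\epsilon_G^{-1})\bigr)$. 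Using $H^0(\Gal{\overline{\Q}_p}{\Q_p}, -) = H^0(I_p,-)^{\Gamma_p}$ and dualizing produces
$$
H^0\!\left(\Gal{\overline{\Q}_p}{\Q_p},\, D_{\rho_2}/\Fil^+D_{\rho_2}\right)^\vee \;\cong\; \bigl(H^0(I_p, D_{\rho_2}/\Fil^+D_{\rho_2})^\vee\bigr)_{\Gamma_p},
$$
and the same $\Frob_p$-action computation as above identifies this quotient with $R_{F,G}/\bigl(1 - i_F(a_p)/i_G(b_p)\bigr)$.

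No step looks like a real obstacle. The only item to keep honest is that $1 - i_F(a_p)/i_G(b_p)$ must be a nonzero element of $R_{F,G}$, so that the cyclic quotient really is a torsion module; this was already observed in the paragraph just before Lemma \ref{strict-selmer-difference}, relying on Hida's Lemma 3.2 in \cite{hida1985ap} that $a_p$ varies nontrivially along $F$ as the weight changes. With that in hand, both displayed isomorphisms follow.
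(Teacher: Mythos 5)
Your proposal is correct and follows essentially the same route as the paper: the paper's argument is precisely the preceding discussion (the two-term resolution by $\gamma_0-1$ together with Lemma \ref{3-inertia-free-iso} identifying the $\Tor_1$ term $\Gamma_p$-equivariantly, then equation (\ref{2-1-module-structures}) and the $\Frob_p$-action by $i_F(a_p)/i_G(b_p)$ to pass to $\Gamma_p$-coinvariants, with duality exchanging invariants and coinvariants for the second module). Your added check that $1-\frac{i_F(a_p)}{i_G(b_p)}\neq 0$, via the paragraph before Lemma \ref{strict-selmer-difference}, is exactly how the paper justifies the torsion claim.
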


\begin{lemma}\label{local-tor-torsion}
\mbox{}
\begin{itemize}
\item For all $i \geq 2$, we have  $\Tor_i^{R_{F,G}[[\Gamma]]}\left(R_{F,G},\ H^0\left(I_p,\frac{D_{\rho_3}}{\Fil^+D_{\rho_3}}\right)^\vee\right)=0$.
\item For all $i \geq 1$, we have $\Tor_i^{R_{F,G}} \left( R_{f,G}, \ H^0\left(I_p,\frac{D_{\rho_2}}{\Fil^+D_{\rho_2}}\right)^\vee \right)=0$.
\end{itemize}
\end{lemma}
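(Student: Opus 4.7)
The plan is to handle the two bullets separately, since each rests on a different structural observation about the module appearing in the second argument of the Tor.

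For the first bullet, my approach would be to exploit the fact that $R_{F,G}$ has finite projective dimension (namely, exactly one) as a module over $R_{F,G}[[\Gamma]]$. Since $\Gamma \cong \mathbb{Z}_p$ with topological generator $\gamma_0$, the element $\gamma_0 - 1$ is a non-zero-divisor in $R_{F,G}[[\Gamma]]$ and generates $\ker(\pi_{3,2})$. This furnishes a length-one free resolution
\begin{equation*}
0 \to R_{F,G}[[\Gamma]] \xrightarrow{\gamma_0 - 1} R_{F,G}[[\Gamma]] \to R_{F,G} \to 0.
\end{equation*}
Tensoring this resolution over $R_{F,G}[[\Gamma]]$ with any module $M$ shows that $\Tor_i^{R_{F,G}[[\Gamma]]}(R_{F,G},M) = 0$ for all $i \geq 2$, since there are simply no terms in the relevant spots of the resulting complex. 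Taking $M = H^0(I_p, D_{\rho_3}/\Fil^+D_{\rho_3})^\vee$ yields the first claim.

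For the second bullet, my strategy is to observe that $H^0(I_p, D_{\rho_2}/\Fil^+D_{\rho_2})^\vee$ is in fact a \emph{free} $R_{F,G}$-module of rank one, so that it is flat over $R_{F,G}$ and all higher Tors with $R_{f,G}$ vanish automatically. Indeed, from the isomorphism given in equation (\ref{2-1-module-structures}), we have
\begin{equation*}
H^0\!\left(I_p, \frac{D_{\rho_2}}{\Fil^+ D_{\rho_2}}\right) \cong \hat{R_{F,G}}\bigl((i_F \circ \epsilon_F^{-1}) \cdot (i_G \circ \epsilon_G)\bigr),
\end{equation*}
whose Pontryagin dual is $R_{F,G}$ twisted by the inverse character; the underlying $R_{F,G}$-module structure is that of $R_{F,G}$ itself, and the character only governs the residual $\Gamma_p$-action (which is irrelevant for the Tor computation over $R_{F,G}$).

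The main conceptual point — and really the only step that required any work — was establishing the two structural descriptions of the local $H^0$ modules, and this is already in hand from Lemma \ref{3-inertia-free-iso} and equation (\ref{2-1-module-structures}) via the ordinarity and $p$-distinguishedness assumptions on $F$ and $G$. Once those are granted, neither bullet presents any real obstacle: the first is a triviality about projective dimension one, and the second is a triviality about flatness of free modules. Thus the proof reduces to citing these earlier observations and writing down the two-term resolution for the first bullet.
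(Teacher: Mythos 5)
Your proposal is correct and matches the paper's own argument: the first bullet follows from the length-one free resolution $0 \to R_{F,G}[[\Gamma]] \xrightarrow{\gamma_0-1} R_{F,G}[[\Gamma]] \to R_{F,G} \to 0$ (i.e.\ projective dimension one of $R_{F,G}$ over $R_{F,G}[[\Gamma]]$), and the second from the freeness of $H^0\left(I_p,\frac{D_{\rho_2}}{\Fil^+D_{\rho_2}}\right)^\vee$ as an $R_{F,G}$-module via equation (\ref{2-1-module-structures}). Nothing further is needed.
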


\begin{proof}
The first statement simply follows from the fact that as an $R_{F,G}[[\Gamma]]$-module, the projective dimension of $R_{F,G}$ (which is isomorphic to $\frac{R_{F,G}[[\Gamma]]}{(\gamma_0-1)}$ as an $R_{F,G}[[\Gamma]]$-module) is equal to $1$. The second statement follows from equation (\ref{2-1-module-structures}), since $H^0\left(I_p,\frac{D_{\rho_2}}{\Fil^+D_{\rho_2}}\right)^\vee $ is a free $R_{F,G}$-module.
\end{proof}

\begin{lemma} \label{van-h2-rho-2-1}
\begin{align*}
H^2\left(\Gal{\overline{\Q}_p}{\Q_p},D_{\rho_1}\right) &= H^2\left(\Gal{\overline{\Q}_p}{\Q_p},\Fil^+D_{\rho_1}\right) \\ & = H^2\left(\Gal{\overline{\Q}_p}{\Q_p},D_{\rho_2}\right) = H^2\left(\Gal{\overline{\Q}_p}{\Q_p},\Fil^+D_{\rho_2}\right) = 0.
\end{align*}
\end{lemma}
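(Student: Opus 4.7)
The plan is to apply Tate local duality to each of the four $H^2$ in the statement, converting them to $H^0$ of Tate-twisted $R$-linear duals of the underlying Galois lattices, and then to decompose those lattices along the ordinary filtration so as to reduce the problem to the non-triviality of certain rank-one characters on the inertia subgroup $I_p$.

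To illustrate for $H^2(\Gal{\overline{\Q}_p}{\Q_p}, D_{\rho_2})$: local duality yields the isomorphism $H^2(\Gal{\overline{\Q}_p}{\Q_p}, D_{\rho_2})^\vee \cong H^0(\Gal{\overline{\Q}_p}{\Q_p}, L_2^{\ast}(1))$, where $L_2^{\ast} := \Hom_{R_{F,G}}(L_2, R_{F,G})$ and $(1)$ denotes the cyclotomic Tate twist. The sequences in (\ref{seq-dim-2}) induce a $\Gal{\overline{\Q}_p}{\Q_p}$-equivariant four-step filtration on $L_2^{\ast}(1)$, whose rank-one graded pieces carry the characters $\epsilon_F \epsilon_G^{-1} \chi_{\mathrm{cyc}}$, $\epsilon_F \delta_G^{-1} \chi_{\mathrm{cyc}}$, $\delta_F \epsilon_G^{-1} \chi_{\mathrm{cyc}}$ and $\delta_F \delta_G^{-1} \chi_{\mathrm{cyc}}$. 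Via the long exact sequences in $\Gal{\overline{\Q}_p}{\Q_p}$-cohomology attached to this filtration, the vanishing of $H^0$ reduces to the assertion that each of the corresponding rank-one free $R_{F,G}$-modules has trivial $\Gal{\overline{\Q}_p}{\Q_p}$-invariants. Since $\epsilon_F$ and $\epsilon_G$ are unramified, restriction to $I_p$ strips the four characters down to $\chi_{\mathrm{cyc}}|_{I_p}$, $\delta_G^{-1}|_{I_p}\chi_{\mathrm{cyc}}|_{I_p}$, $\delta_F|_{I_p}\chi_{\mathrm{cyc}}|_{I_p}$ and $\delta_F|_{I_p}\delta_G^{-1}|_{I_p}\chi_{\mathrm{cyc}}|_{I_p}$. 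By the standard description of $\delta_F$ and $\delta_G$ in Hida theory, $\delta_F|_{I_p}$ and $\delta_G|_{I_p}$ take values in $i_F(R_F^\times)$ and $i_G(R_G^\times)$ respectively, each depending non-trivially on its weight variable. Since $\chi_{\mathrm{cyc}}$ is $O^\times$-valued and the two subrings $i_F(R_F)$ and $i_G(R_G)$ meet in $O$ inside $R_{F,G}$, none of these four products can collapse to $1$ on $I_p$, and the corresponding rank-one modules have trivial $\Gal{\overline{\Q}_p}{\Q_p}$-fixed submodules. One concludes $H^0(\Gal{\overline{\Q}_p}{\Q_p}, L_2^{\ast}(1)) = 0$. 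The vanishings for $\Fil^+ D_{\rho_2}$, $D_{\rho_1}$ and $\Fil^+ D_{\rho_1}$ follow by the same recipe: for the $\Fil^+$ pieces the filtration is two-step (involving only the characters with a $\delta_G^{-1}$ factor); for the $\rho_1$ cases one replaces $\delta_F$ and $\epsilon_F$ by $\delta_f$ and $\epsilon_f$, noting that $\delta_f|_{I_p}$ is a non-trivial integral power of $\chi_{\mathrm{cyc}}|_{I_p}$ (times a finite nebentypus) for classical $f$ of weight $k \geq 2$, which combined with $\delta_G|_{I_p}$ not lying in $O^\times$ again rules out trivial products.

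The main obstacle is the case-by-case verification that each of the characters displayed above is genuinely non-trivial in the coefficient ring $R_{F,G}^\times$ or $R_{f,G}^\times$. The essential ingredient is that the weight-variable dependence of $\delta_F|_{I_p}$ (respectively the non-trivial $\chi_{\mathrm{cyc}}$-power part of $\delta_f|_{I_p}$ for weight $k \geq 2$) cannot be cancelled against the combined image of $\delta_G^{\pm 1}|_{I_p}$ and $\chi_{\mathrm{cyc}}|_{I_p}$, because those remaining factors lie in the proper subring $i_G(R_G^\times) \cdot O^\times$ (respectively $j_G(R_G^\times) \cdot O^\times$) of the unit group of the completed tensor product.
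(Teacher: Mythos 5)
Your proposal is correct and takes essentially the same route as the paper: Tate local duality converts each $H^2$ into $H^0$ of the twisted dual lattice, and the vanishing is reduced to the non-triviality on $I_p$ of the four rank-one characters $\chi_p$, $\delta_G^{-1}\chi_p$, $\delta_F\chi_p$, $\delta_F\delta_G^{-1}\chi_p$ (and their $\rho_1$-analogues with $\delta_f$), which are ruled out exactly as you argue, because the ramified weight-variable parts of $\delta_F$ (resp.\ $\delta_f$) and $\delta_G$ take values that cannot cancel against each other or against $O^\times$ inside the completed tensor product. The only cosmetic difference is that the paper passes to the fraction field of $R_{F,G}$ and compares $I_p$-semisimplifications of the two $2$-dimensional pieces, rather than running an integral four-step filtration as you do.
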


\begin{proof}
We will show that $H^2\left(\Gal{\overline{\Q}_p}{\Q_p},D_{\rho_2}\right) =0$. The rest of the lemma follows using similar arguments. By local duality, we have
\begin{align*}
H^2\left(\Gal{\overline{\Q}_p}{\Q_p},D_{\rho_2}\right) &\cong H^0\left(\Gal{\overline{\Q}_p}{\Q_p},L^*_{2}\right), \\ \notag & \text{where } L_2^* \cong \Hom_{R_{F,G}}\left(L_G \otimes_{R_G} R_{F,G}, \ L_F \otimes_{R_F} R_{F,G} (\chi_p)\right).
\end{align*}
Here, $\chi_p : G_\Sigma \rightarrow \Z_p^\times$ is the $p$-adic cyclotomic character given by the action of $G_\Sigma$ on $\mu_{p^\infty}$, the $p$-power roots of unity. Let $E$ denote the fraction field of $R_{F,G}$. We define the following vector spaces over $E$ that have an action of $\Gal{\overline{\Q}_p}{\Q_p}$:
\begin{align*}
V_F(\chi_p) := L_F \otimes_{R_F} E(\chi_p), \qquad V_G := L_G \otimes_{R_G} E.
\end{align*}
It will be sufficient to show that
\begin{align} \label{inertia-invariant}
\Hom_E\left(V_G,V_F(\chi_p)\right)^{I_p}\stackrel{?}{=} 0.
\end{align}
Both $V_F(\chi_p)$ and $V_G$ are reducible representations for the action of $\Gal{\overline{\Q}_p}{\Q_p}$. Let the $V_F(\chi_p)^{ss}$ and $V_G^{ss}$ denote the semi-simplifications of $V_F(\chi_p)$ and $V_G$ respectively for the action of the inertia subgroup  $I_p$. We have the following $I_p$-equivariant isomorphisms:
\begin{align*}
V_F(\chi_p)^{ss} \cong E\bigg(\chi_p (i_F \circ \delta_F) \bigg)\oplus E(\chi_p), \qquad V_G^{ss} \cong E(i_G \circ \delta_G) \oplus E.
\end{align*}
In this paragraph, we will consider characters over the inertia group $I_p$. The values of the character $\chi_p (i_F \circ \delta_F)$ lie inside the ring $R_F$ but not inside the ring $O$. Similarly, the values of the character $i_G \circ \delta_G$ lie inside the ring $R_G$ but not inside the ring $O$. As a result, the character $\chi_p (i_F \circ \delta_F)$ cannot equal $i_G \circ \delta_G$ or the trivial character. Also, the $p$-adic cyclotomic character $\chi_p$ (which takes values in $\Z_p^\times$) cannot equal the character $i_G \circ \delta_G$ or the trivial character. Equation (\ref{inertia-invariant}) now follows from these observations; and hence so does the lemma.
\end{proof}

The validity of the Weak Leopoldt conjecture \ref{weak-leo} along with Proposition 6.1 in \cite{MR2290593} and Proposition 5.2.4 in \cite{greenberg2010surjectivity} gives us the following corollary to Lemma \ref{van-h2-rho-2-1} .
\begin{corollary} \label{global-h2-vanish}
$H^2\left(G_\Sigma,D_{\rho_1}\right)=H^2\left(G_\Sigma,D_{\rho_2}\right)= H^2\left(G_\Sigma,D_{\rho_3}\right)=0.$
\end{corollary}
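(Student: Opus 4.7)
The plan is to read the vanishing of $H^2(G_\Sigma,D_{\rho_i})$ off the defining exact sequence
\[
0 \rightarrow \Sha^2(D_{\rho_i}) \rightarrow H^2(G_\Sigma,D_{\rho_i}) \rightarrow \prod_{\nu \in \Sigma} H^2(\Gal{\overline{\Q}_\nu}{\Q_\nu},D_{\rho_i}),
\]
by checking that (a) all the local $H^2$ terms on the right vanish, and (b) the Tate--Shafarevich group $\Sha^2(D_{\rho_i})$ itself is zero. The local terms are already handled by work done earlier in this section: for $\nu \in \Sigma_0$ the vanishing $H^2(\Gal{\overline{\Q}_\nu}{\Q_\nu},D_{\rho_i})=0$ follows from \ref{tor}, while at $\nu=p$ the vanishing $H^2(\Gal{\overline{\Q}_p}{\Q_p},D_{\rho_3})=0$ is equation (\ref{more-h2-vanishing-cyc}) and the cases $i=1,2$ are precisely the content of Lemma \ref{van-h2-rho-2-1}. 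Thus only the vanishing of $\Sha^2$ remains.

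For the $\Sha^2$ step I would invoke Greenberg's framework directly. The weak Leopoldt statement \ref{weak-leo} guarantees that $\Sha^2(D_{\rho_i})^\vee$ is a torsion module over the relevant Iwasawa algebra. Proposition 6.1 in \cite{MR2290593} (together with the refinement Proposition 5.2.4 in \cite{greenberg2010surjectivity}, which extends the conclusion to the Galois representations of the form considered here) shows that the Pontryagin dual of $\Sha^2(D_{\rho_i})$ is a reflexive module with no nonzero pseudo-null submodules; in particular, if it is torsion then it vanishes. Combining this reflexivity with the torsion conclusion from weak Leopoldt forces $\Sha^2(D_{\rho_i})^\vee=0$, and hence $\Sha^2(D_{\rho_i})=0$.

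With both inputs in hand, the map $H^2(G_\Sigma,D_{\rho_i}) \to \prod_{\nu \in \Sigma} H^2(\Gal{\overline{\Q}_\nu}{\Q_\nu},D_{\rho_i})$ is injective with zero target, so $H^2(G_\Sigma,D_{\rho_i})=0$ for each $i \in \{1,2,3\}$. The only genuinely substantive step is the application of Greenberg's reflexivity result in conjunction with weak Leopoldt; everything else is either the hypothesis \ref{tor} or Lemma \ref{van-h2-rho-2-1}. The main conceptual obstacle is that the rings $R_{F,G}$, $R_{F,G}[[\Gamma]]$ and $R_{f,G}$ are not known to be regular, so one must verify that the hypotheses of Proposition 6.1 in \cite{MR2290593} and Proposition 5.2.4 in \cite{greenberg2010surjectivity} are applicable in this possibly non-regular setting; this is where the earlier structural results of \cite{bharathwaj2016algebraic} cited in Section \ref{towards-the-proof} do the heavy lifting.
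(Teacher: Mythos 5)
Your proposal is correct and follows essentially the same route as the paper: the paper likewise deduces the corollary from Lemma \ref{van-h2-rho-2-1} (together with the local vanishing at $\nu \in \Sigma_0$ coming from \ref{tor}) combined with the Weak Leopoldt statement \ref{weak-leo} and the structural results of Proposition 6.1 in \cite{MR2290593} and Proposition 5.2.4 in \cite{greenberg2010surjectivity}, which is exactly your decomposition into local vanishing plus vanishing of $\Sha^2(D_{\rho_i})$. Your write-up merely makes explicit the exact sequence defining $\Sha^2$ and the torsion-plus-coreflexivity argument that the paper leaves implicit.
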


\section{Examples when $R_F$ is not regular} \label{examples}

To end, we will provide some examples when $R_F$, the normalization of a primitive component of the ordinary Hecke algebra, is not a UFD (and hence not regular). These examples will be based on the circle of ideas developed in \cite{hida1998global}, \cite{cho1999deformation} and \cite{cho2003deformations}. We have used Sage \cite{sage} to perform the computations.  Let $E=\Q(\sqrt{D})$ be a real quadratic field with discriminant $D > 0$, and where the prime $p$ splits; we have $(p) = \p_1 \p_2$ for two distinct prime ideals $\p_1$ and $\p_2$ in the ring of integers of the number field $E$. Let $k$ be an even integer strictly greater than $2$. Let $\chi_E : \Gal{\overline{\Q}}{\Q} \rightarrow \{\pm 1\}$ be the quadratic character associated to  $E$. Let $\epsilon$ be a fundamental unit for $E$. Let us suppose that the following condition holds:
\begin{align}\label{divisibility-cusp}
p \text{ divides } \text{Norm}_{E/\Q}\left(\epsilon^{k-1}-1\right).
\end{align}
We will assume as a result, without loss of generality, that $\epsilon^{k-1}-1 \in \mathfrak{p}_1$. Equation (\ref{divisibility-cusp}) allows us to apply Proposition 2.1 in \cite{hida1998global}. We have an ordinary $p$-stabilized newform $f = \sum \limits_{n \geq 1} a_n(f) q^n$ in $S_k^{\text{ord}}\left(Dp,\chi_E\right)$ such that the residual representation $\overline{\rho}_f$ (defined over a finite field $\mathbb{F}$ with characteristic $p$) associated to $f$ satisfies the following properties:
\begin{align*}
\overline{\rho}_f \text{ is absolutely irreducible}, \quad \overline{\rho}_f \otimes \chi_E \cong \overline{\rho}_f, \quad \text{$\overline{\rho}_f$ is $p$-ordinary (in the sense of  \cite{MR1854117})}.
\end{align*}
Let $O$ denote the ring of integers in a finite extension of $\Q_p$ containing the unique unramified quadratic extension of $\Q_p$, the ring of Witt vectors $W(\mathbb{F})$ and the $p^r$-th roots of unity (where $p^r$ denotes the exponent of the $p$-Sylow subgroup of the ideal class group of $E$).  Let $h^{\text{ord}}(Dp^\infty,\chi_{E},O)$ denote the universal $p$-ordinary Hecke algebra, with coefficients in $O$, with tame level $D$ and whose prime-to-$p$ part of the Nebentypus equals the character $\chi_E$. Let $h_\m$ be the unique local ring of $h^{\text{ord}}(Dp^\infty,\chi_{E},O)$ through which the following ring homomorphism factors:
\begin{align*}
& \lambda_f : h^{\text{ord}}(Dp^\infty,\chi_{E},O) \rightarrow \overline{\mathbb{F}}_p, \quad \text{ given by} \\
&\lambda_f(T_l) = \overline{a_l(f)}, \forall \text{ primes } l \text{ not dividing } Dp, \\ & \lambda_f(U_l) = \overline{a_l(f)},\  \forall \ \text{primes} \ l \text{ dividing } Dp.
\end{align*}
See \cite{hida1986galois} for a definition of the Hecke operators $T_l$ and $U_l$. Also note that $h_\m$ is the ``full'' Hecke algebra generated by the Hecke operators $T_l$ along with the diamond operators $<l>$ for primes $l$ not dividing $Dp$ and the $U_l$ operators for primes $l$ dividing $Dp$. Let $\rho_\m : G_\Q \rightarrow \Gl_2(h_\m)$ denote the Galois representation (section 2.4 of \cite{emerton2006variation}) such that  $\text{Trace}(\rho_\m(\Frob_l))=T_l$, for all primes $l \nmid Dp$. \\

Let $\text{CNL}_O$ denote the category of complete, Noetherian, local $O$-algebras whose residue field equals the residue field of $O$. Let $\text{SETS}$ denote the category of sets. Consider the deformation functor $\mathfrak{F} : \text{CNL}_O \rightarrow \text{SETS}$ that, for every $B \in \text{CNL}_O$, is given by strict equivalence classes of deformations $\varrho : G_\Q \rightarrow \Gl_2(B)$ that are of type $\mathfrak{D}$ and such that the residual representation associated to the deformation $\varrho$ is isomorphic to $\overline{\rho}_f$. A Galois representation $\varrho : G_\Q \rightarrow \Gl_2(B)$ is said to be of type $\mathfrak{D}$ if it satisfies the following properties:
\begin{itemize}
\item $\varrho$ is unramified at all primes $l$ not dividing $Dp$.
\item The restriction $\varrho \mid_{\Gal{\overline{\Q}_p}{\Q_p}}$ to the decomposition group at $p$  is $p$-ordinary.
\item If $l$ is a prime dividing $D$, we have $\rho \mid_{\Gal{\overline{\Q}_l}{\Q_l}} \ \sim  \ \left(\begin{array}{cc} 1 & 0 \\ 0 & \chi_E \end{array}\right)$.
\end{itemize}
Suppose also that the following condition holds:
\begin{align}
2(k-1) \not \equiv 0 \ (\text{mod } p-1).
\end{align}
By Theorem 3.9 in \cite{MR1854117}, the ring $h_\m$ along with the Galois representation $\rho_\m : G_\Q \rightarrow \GL_2(h_\m)$  represents the deformation functor $\mathfrak{F}$. The isomorphism $\overline{\rho}_f \cong \overline{\rho}_f \otimes \chi_E$ induces an involution $\tau : h_\m \rightarrow h_\m$ that is $O[[x_F]]$-equivariant. Let $h_\m^+$ denote the subring of $h_\m$ fixed by the involution $\tau$. We have the following natural inclusions of $O[[x_F]]$-algebras:
\begin{align*}
\O[[x_F]] \longhookrightarrow h_\m^+ \longhookrightarrow h_\m.
\end{align*}

Theorem 1.1 and 1.2 in \cite{hida1986galois} give us a natural surjection $h_\m \twoheadrightarrow h_{k,\m}$. Here, the ring $h_{k,\m}$ denotes the local factor of $h_k^{\text{ord}}(\Gamma_1(Dp),\chi_E, O)$  (the ``full'' Hecke algebra acting on the space of ordinary cuspforms of weight $k$ with Nebentypus $\chi_E$ and conductor $Dp$ generated by the operators $
T^{(k)}_l$ for primes $l \nmid Dp$ and $U^{(k)}_l$ for primes $l \mid Dp$) through which the following ring homomorphism factors:
\begin{align*}
& \beta_f: h_k^{\text{ord}}(\Gamma_1(Dp),\chi_E,O) \rightarrow \overline{\mathbb{F}}_p, \\ & \beta_f\left(T^{(k)}_l\right) = \overline{a_l(f)}, \ \forall l  \nmid Dp, \quad  \beta_f\left(U^{(k)}_l\right) = \overline{a_l(f)}, \ \forall l \mid Dp.
\end{align*}
Let us now suppose that the following condition holds:
\begin{align} \label{len-hecke-algebra}
\text{Rank}_{O}   \ h_{k,\m}=2.
\end{align}
Nakayama's lemma and the fact $h_\m$ is a  free $O[[x_F]]$-algebra (Theorem 3.1 in \cite{MR868300}) then let us conclude  that $\text{Rank}_{O[[x_F]]}   \ h_{\m}=2$. Note that by Corollary 3.12 in \cite{cho2003deformations}, the natural inclusion $h^+_\m \stackrel{\not \cong}{\hookrightarrow} h_\m$ is not an isomorphism. So, the natural inclusion $O[[x_F]] \stackrel{\cong}{\hookrightarrow} h^+_\m$ must be an isomorphism. By Theorem 1.1 in \cite{cho1999deformation}, we have the following isomorphism of $O[[x_F]]$-algebras:
\begin{align}
h_\m \cong  h^+_m \left[\vartheta\right] \cong O[[x_F]] [\vartheta], \quad
\text{where $\vartheta^2 = \sqrt{\left((1+x_F)^{\frac{\log(\epsilon)}{\log(1+p)}}-1\right)}$}.
\end{align}

The function $\log$ denotes the $p$-adic logarithm on $E$ that is determined by the inclusion $E \hookrightarrow E_{\p_1}$, where the field $E_{\p_1}$ denotes the completion of the field $E$ with respect to the valuation $\val_{\p_1}$ given by the prime ideal $\p_1$. Note that $E_{\p_1}\cong \Q_p$. Now, let us further suppose that $\epsilon$ is a $p$-th power in $E_{\p_1}$.  This condition is satisfied precisely when the following inequality holds:
\begin{align}\label{pth-power}
\val_{\p_1}\left(\epsilon^{p-1}-1\right) \geq 2.
\end{align}
In this case, the element $(1+x_F)^{\frac{\log(\epsilon)}{\log(1+p)}}-1$ is a square-free polynomial in $O[[x_F]]$ and a product of at least $2$ distinct irreducible elements of $O[[x_F]]$. As a result,  the local ring $h_\m$ is an integrally closed domain that is not a UFD. Finally, we let $F$ equal the unique Hida family passing through~$f$.

\begin{Remark}
The Galois representation $\rho_\m$, as constructed in Section 2.4 of \cite{emerton2006variation}, takes values in the ``reduced'' ordinary Hecke algebra $h_\m^{\text{red}}$ (generated by the Hecke operators $T_l$ and the diamond operators $<l>$, for primes $l$ not dividing $Dp$). Note that $h_\m^{\text{red}}$ is a subring of $h_\m$. Theorem 3.9 in \cite{MR1854117}, in fact, identifies the universal deformation ring for the functor $\mathfrak{F}$ with $h_\m^{\text{red}}$. It is Proposition 2.4.2 in \cite{emerton2006variation} that provides us an isomorphism $h_\m^{\text{red}} \cong h_\m$. See Theorem 12 in \cite{MR1039770} that also addresses this fact.
 \end{Remark}

 \begin{comment}

 Proposition 2.5.2 in \cite{emerton2006variation} then allows us to embed $h_\m$ (which in our case is a domain) inside an irreducible component of a ``primitive'' ordinary Hecke algebra (that is integral over $O[[x_F]]$ and whose fraction field, by Proposition 2.5.6 in \cite{emerton2006variation}, equals the fraction field of $h_\m$). In our case, this embedding also turns out to be an isomorphism since $h_\m$ is also integrally closed.
 \end{comment}

\begin{Remark}\label{sagecode}
We will now explain how to ascertain that $\text{Rank}_O \ h_{k,\m}=2$. Let us define the following set of Hecke eigenforms of conductor $Dp$, weight $k$ and Nebentypus $\chi_E$ with coefficients in $\overline{\Q}_p$:
{
\begin{align*}
\mathcal{B}_{k,\m,st} = \{g \ \big|  & \quad g \text{ is a $p$-stabilized Hecke eigenform in $S^{\text{ord}}_k(Dp,\chi_E)$},  \\ & \quad a_1(g)=1, \ a_l(f) \equiv a_l(g) \ (\text{mod } l), \ \forall l \nmid Dp \}.
\end{align*}
}
By Corollary $3.7$ in \cite{hida1986galois}, it suffices to show that $|\mathcal{B}_{k,\m,st}|=2$. Note that since $f$ and its twist $f \otimes \chi_E$ belong to the set $\mathcal{B}_{k,\m,st}$, we have $|\mathcal{B}_{k,\m,st}|\geq2$. For each $g \in \mathcal{B}_{k,\m,st}$, let $g_0$ be the unique newform associated to it (Theorem 3.22 in \cite{hida2000modular}). Throughout this remark, it will be useful to keep in mind that since the weight $k$ is strictly greater than $2$ and that the conductor of $\chi_E$ is equal to $D$ (and hence of order prime to $p$), the conductor of $g_0$ is also equal to $D$ (Proposition 3.1 in \cite{hida1985ap}). We will now use the following code in Sage to construct a set $\mathcal{B}_{full}$, whose $\Gal{E}{\Q}$-orbit equals the set of all normalized (new) Hecke eigenforms of weight $k$, conductor $D$ and Nebentypus~$\chi_E$:
\begin{sageblock}
%sage    B_full = Newforms(chi_E, k, names='alpha');
\end{sageblock}
 For each $g \in \mathcal{B}_{full}$, let $K_g$ denote the number field generated by the Hecke eigenvalues of $g$. Note that if the $p$-stabilization of $g$ belongs to $\mathcal{B}_{k,\m,st}$ and if $l$ is a prime  that remains inert in the extension $E/\Q$, then $l$ divides $\text{Norm}^{K_g}_\Q(a_l(g))$. This is because the residual Galois representation $\overline{\rho}_g$  associated to $g$ (which is isomorphic to $\overline{\rho}_f$) would be isomorphic to  $\overline{\rho}_g \otimes \chi_E$ (since $\overline{\rho}_f \cong  \overline{\rho}_f \otimes \chi_E$). Define a subset $\mathcal{B}'_{k,\m,new}$ of $\mathcal{B}_{full}$ which contains those newforms $g$ such that $l$ divides $\text{Norm}^{K_g}_\Q(a_l(g))$, for all primes $l$ that remain inert in the extension $E/\Q$ and such that $3 \leq l \leq 100$. In the examples listed in Table \ref{table_examples}, $|\mathcal{B}'_{k,\m,new}|$ turns out to be $1$. And hence in those examples, the $\Gal{E}{\Q}$-orbit of $\mathcal{B}'_{k,\m,new}$ is in bijection with the set $\mathcal{B}_{k,\m,st}$ (the bijection is obtained through the process of $p$-stabilization) leading us to conclude that $|\mathcal{B}_{k,\m,st}| = 2 \times |\mathcal{B}'_{k,\m,new}|=2$. This will ascertain that $\text{Rank}_O \ h_{k,\m}=2$.
\end{Remark}

\begin{Remark}
Fixing a quadratic field $\Q(\sqrt{D})$ and letting the primes $p$ vary, it seems computationally difficult to find a large number of primes $p$ such that equation (\ref{pth-power}) holds.  William Stein informed us that the complexity of the code in Remark \ref{sagecode} is $\mathcal{O}\left((Dk)^3\right)$. This explains the paucity of our~examples.
\end{Remark}

\setlength\extrarowheight{18pt}

\begin{minipage}{\linewidth}

\centering
\captionof{table}{$R_F = h_\m = O[[x_F]]\left[\sqrt{\left(1+x_F\right)^{\frac{\log(\epsilon)}{\log(1+p)}}-1}\right]$} \label{table_examples}
\begin{tabularx}{\textwidth}{|p{2cm}|p{1.5cm}| p{2cm}|X|p{2.5cm}| p{1.5cm}|}
\hline
 $\Q(\sqrt{D})$ & $(k, p)$ & $\epsilon$ &  $\text{Norm}_{E/\Q}\left(\epsilon^{k-1}-1\right)$ & $\val_{\p_1}\left(\epsilon^{p-1}-1\right)$ & $[K_f:\Q]$ \\ \hline

$\Q(\sqrt{10 * 4})$ &  $(20, 191)$ & $\sqrt{10}+3$  & $-1 * 2 * 3 * 191^2 * 4523 * 1021973$
&$2$ & $112$  \\ \hline

$\Q(\sqrt{33})$ &  $(10, 37)$ & $4\sqrt{33} + 23$  & $-1 * 2^2 * 11 * 37^4 * 47^2 * 71^2$
&$2$ & $32$ \\ \hline
$\Q(\sqrt{89})$ &  $(4, 5)$ & $53\sqrt{89} + 500$  & $-1 * 2^3 * 5^3 * 1000003$
&$3$ & $22$ \\ \hline
$\Q(\sqrt{89})$ &  $(6, 5)$ & $53\sqrt{89} + 500$  & $-1 * 2^3 * 5^4 * 11 * 18181909091
$
&$3$ & $36$ \\ \hline
$\Q(\sqrt{629})$ &  $(4, 5)$ & $\frac{\sqrt{629}+25}{2}$ & $-1*2^2 * 5^2 * 157$ &$2$ & $168$ \\ \hline
\end{tabularx}

\end{minipage}

\subsubsection*{Acknowledgements} We are greatly indebted to Ralph Greenberg. This project was completed while the author was his student at University of Washington. We would like to thank Haruzo Hida for indicating to us how to compute the examples in Section \ref{examples}. We are grateful to Antonio Lei, Jeanine Van Order and the referee for their helpful comments.

\begin{comment}
$\Q(\sqrt{1068})$ &  $(4 , 31)$ & $-147\sqrt{267}+2402$ & $-1 * 2 * 5^2 * 7^4 * 31^4$ & 2 \\ \hline
$\Q(\sqrt{1068})$ &  $(6 , 7)$ & $-147\sqrt{267}+2402$ & $-1 * 2 * 7^4 * 59^2 * 151^2 * 2591^2$ & 2 \\ \hline
$\Q(\sqrt{1720})$ & $(6, 61)$& $69015\sqrt{1720} - 2862251$&
$2^3 * 3^2 * 43^3 * 61^4 * 9281^2 * 948901^2$ & 2 \\ \hline
% $82$ & $6$ & $11$ \\ \hline
\end{comment}

\bibliography{biblio}
\bibliographystyle{plain}

\begin{comment}

\end{comment}

\end{document}